\definecolor{green}{rgb}{0.0, 0.7, 0.1}
\definecolor{red}{rgb}{0.85, 0.0, 0.0}
\definecolor{blue}{rgb}{0, 0, 0.8}
\DeclareMathOperator*{\Exp}{Exp}
\DeclareMathOperator*{\tr}{tr}
\newcommand{\E}{\mathbb{E}}
\newcommand{\R}{\mathbb{R}}
\newcommand{\N}{\mathbb{N}}
\newcommand{\A}{\mathcal{A}}
\newcommand{\D}{\mathcal{D}}
\newcommand{\PP}{\mathbb{P}}
\newcommand{\Pcal}{\mathcal{P}}
\newcommand{\F}{\mathcal{F}}
\newcommand{\Lcal}{\mathcal{L}}
\newcommand{\W}{\mathcal{W}}
\newcommand{\pr}[1]{{#1}^{\prime}}
\newcommand{\de}{\mathrm{d}}
\newcommand{\Ncal}{\mathcal{N}}
\newcommand{\act}{\sigma_{\mathrm{act}}}
\numberwithin{equation}{section}
\numberwithin{figure}{section}
\numberwithin{table}{section}
\theoremstyle{plain}
\newtheorem{thm}{Theorem}[section]
\crefname{thm}{theorem}{theorems}
\theoremstyle{definition}
\theoremstyle{definition}
\theoremstyle{plain}
\newtheorem{prop}[thm]{Proposition}
\theoremstyle{plain}
\newtheorem{cor}[thm]{Corollary}
\theoremstyle{plain}
\newtheorem{lemma}[thm]{Lemma}
\theoremstyle{plain}
\newtheorem{ass}[thm]{Assumption}
\crefname{ass}{assumption}{assumptions}
\theoremstyle{plain}
\newtheorem{remark}[thm]{Remark}
\theoremstyle{definition}
\theoremstyle{definition}
\renewcommand{\@algocf@capt@plain}{above}
\crefname{algocf}{Algorithm}{Algorithms}
\Crefname{ALC@unique}{Line}{Lines}
\title{Extended mean field control: a global numerical solution via finite-dimensional approximation}
\author{Athena Picarelli\thanks{Department of Economics, University of Verona,\\
      \indent \indent \texttt{\{athena.picarelli, marco.scaratti\}@univr.it}.}
\and Marco Scaratti \footnotemark[1]
\and Jonathan Tam \thanks{Mathematical Institute, University of Oxford, \texttt{tam@maths.ox.ac.uk}.}}
\date{March 18, 2026}
\begin{document}

\maketitle
\begin{abstract}
We investigate the global numerical approximation of a class of extended mean field control problems (MFC), where the dynamics and costs depend on the joint distribution of the state and the control. We propose a framework to approximate the value function globally over the Wasserstein space, moving beyond the restriction of fixed initial conditions. Our approach exploits the propagation of chaos by approximating the infinite-dimensional MFC problem by an $N$-player cooperative game, together with the usage of finite-dimensional solvers. This method avoids the need to parametrise functions on an infinite-dimensional space, offering a balance between probabilistic rigor and computational efficiency.
\end{abstract}

% % REQUIRED
% \begin{keywords}
% Mean field control, Master Bellman equation, Numerical approximation
% \end{keywords}

% % REQUIRED
% \begin{MSCcodes}
% %\marco{keep the same?}
% 49N80, 65D15, 93E20
% \end{MSCcodes}

\section{Introduction}
In this article, we consider a finite-dimensional approximation for a class of extended mean field control problems. Our method provides a global approximation of the value function on the infinite-dimensional Wasserstein space, as opposed to %only solving  the optimal control problem for 
a fixed initial condition. % such that $\Lcal(X_t) = \mu$.
%Similar to \cite{pham2022mean, pham2023mean}, our goal is to learn the function $v$ on the \textit{whole domain} $[0,T] \times \Pcal_2(\R^d)$, as opposed to a given initial condition only.
%Rather than directly approximating $v$, which is a function on the infinite-dimensional Wasserstein space, 
We appeal to the propagation of chaos \cite{lacker2017limit, djete2022extended} and consider an approximating finite-player cooperative game, which allows us to fit a finite-dimensional function by proxy.\\

The study of mean field control (MFC) problems, arising from the control of SDEs of McKean-Vlasov type (MKV SDE), is closely related to the theory of mean field games (MFG), both of which act as an asymptotic approximation of stochastic games between a large number of players. In particular, MFC problems correspond to symmetric cooperative games, where the role of the limiting SDE is that of a central planner seeking a social optimum. MFC and MFG problems appear in many economics and financial applications, including planning in energy markets \cite{aid2020entry}, as well as optimal liquidation and price impact problems with aggregate flow \cite{huang2019mean,cardaliaguet2018mean,fu2024mean}. An extensive treatment of MFC and MFG problems can be found in the two-volume monograph \cite{carmona2018probabilistic}. From a control perspective, MFC problems can be viewed in the lens of a single-agent optimisation problem in which the dynamics and cost depend on the law of the controlled state process. Such formulations can arise in contexts such as optimal transport and target constraint problems, where related Schr\"odinger bridge problems have recently attracted attention in generative modelling in finance \cite{de2021diffusion,vargas2021solving,wang2021deep}.\\

Due to the nature of the MKV SDE, the state variable in the value function for an MFC problem is a probability measure in the Wasserstein space. Whilst the value function can be considered as the unique viscosity solution to a class of Master Bellman PDEs under certain conditions \cite{cosso2023optimal, cosso2023master}, it is typical in practice to numerically approximate the value function by particle methods. The finite-dimensional reduction of problems on infinite-dimensional spaces of probability measures have been considered in theoretical works \cite{gangbo_finite_2021, talbi_finite-dimensional_2024}, with particle approximation methods being considered in \cite{germain2022deepsets, dayanikli2024deep, carmonaa2023deep, soner2025learning}. It is typical to parametrise the control with a neural network, and minimising a loss function based on either a dynamic programming or BSDE characterisation of the control problem. This method has demonstrated empirical effectiveness at overcoming the curse of dimensionality associated with more traditional PDE-based methods \cite{han2017deep}.\\

The main contribution of this paper is to provide a numerical approximation of the MFC value function on the Wasserstein space globally. Given a value function $v$ for the MFC problem, the propagation of chaos yields an approximation of the form
    \begin{align}\label{intro_v}
        v(t,\mu) \approx \int_{(\R^d)^N} \bar{v}_N(t,x_1,\ldots, x_N) \de \mu^{\otimes N},
    \end{align}
    where $\bar{v}_N$ is the value function of an associated $N$-player cooperative game. We therefore approximate $\bar{v}_N$ globally, which is a finite-dimensional function. Afterwards, $v(t,\mu)$ can be computed by sampling $\bar{v}_N$ according to the measure $\mu$. This contrasts with the aforementioned works, where the MFC dynamics are simulated directly via particle methods, and solutions are only given for fixed initial conditions. To the best of our knowledge, \cite{pham2022mean, pham2023mean} provide a global approximation of the value function by another neural network parametrisation, followed by a fitting with regression. However, learning a function on the Wasserstein space comes at a large computational cost, and the estimation of the joint law $\Lcal(X_t, \alpha_t)$, where $X$ and $\alpha$ denote the state and control processes respectively, becomes an extra source of approximation error. By first passing to the finite-dimensional setting, and learning instead the associated (finite-dimensional) value function, we benefit from the following:\\

\begin{enumerate}
    \item \emph{More efficient training of the neural network.} The law of the state process $\Lcal(X_t)$ appear in MFC problems in either the coefficients of the state dynamics, the cost functions, or as an input in feedback controls of the form $a(t,X_t, \Lcal(X_t))$. A large number of trajectories (usually in the order of $10^6$) would be required for an accurate Monte Carlo estimate of the term $\Lcal(X_t)$.
    %This leads to a tradeoff between either high GPU memory usage or long training times.
    In the finite-dimensional problem, the mean field terms are replaced with the empirical distributions $\frac{1}{N} \sum^N_{n=1} \delta_{X^{n}_t}$. In this case, $N$ Brownian trajectories are sufficient for each estimate of the value function. This gives a higher flexibility on the batch size for training: a smaller batch size is useful for quicker gradient descent iterates, whilst a larger batch size can reduce the variance in the gradient updates for a more stable training process. \vspace{0.5em}
    \item \emph{Avoiding an \textit{a priori} discretisation of the Wasserstein space.} Training the finite-dimensional value function $\bar{v}_N$ does not require a discretisation of the underlying state space. Moreover, the evaluation of the integral in \eqref{intro_v} only requires evaluating sampled inputs through the trained neural network, so the Monte Carlo error can be reduced to the desired accuracy with modest memory capacity constraints. 
    %In the training step in the mean field regime, the neural network takes either quantised measures, or a sequence of moments of the state process (up to a fixed order) as input \cite{pham2022mean, pham2023mean, pham2023actor}. 
    Our method avoids accumulating errors associated with the quantisation or truncation of moments over multiple time steps that would occur when simulating the mean-field system.  \vspace{0.5em}
     \item \emph{Flexible choice of finite-dimensional solvers.} %The control problem we consider in this article is in quite a general form, requiring only standard Lipschitz conditions in the coefficients. We perform gradient descent on the control by directly minimising the sum of the rewards. 
    Our method is independent of the choice of the finite-dimensional solver. Depending on the problem assumptions, dynamic programming or BSDE characterisations of the value function can be used as a basis for the algorithm \cite{han2017deep, germain2022deepsets, lefebvre2023differential}. In contrast, BSDE characterisations are much harder to obtain in the mean field regime when the coefficients depend on the law of the controls $\Lcal(\alpha_t)$, as in  \cite{acciaio2019extended}, which restricts the possible choice of algorithms. \vspace{0.5em}
    %For example, in  it is shown in general that the maximum principle for extended MFC appears in the form of a variational inequality, so that a pointwise minimisation, as would be the case for finite dimensions, would not be possible. 
    \item \emph{Approximating non-feedback controls.} In the spirit of the approach in \cite{cosso2023master}, we show that under mild assumptions, the MFC problem can always be approximated with a regularised problem through the addition of independent Brownian noise. The regularised problem then admits an optimal feedback control. We show in a numerical example in \Cref{sec:experiments} that our method is able to approximate a non-feedback control that depends only on its random initial condition. This justifies a neural network parametrisation of the control, which are implicitly feedback in nature.
    %The parametrisation of controls with a neural network implicitly implies that the optimisation is performed over feedback controls. An equivalence between open and closed-loop controls with the presence of common noise is provided in \cite{djete2021closed}, under the condition that $\sigma$ is non-degenerate. A representation of the optimal control as a function of the state and adjoint processes is also obtained in \cite{reisinger2020optimal, reisinger2021fast} for affine coefficients of the dynamics. However, in the general case the optimal control for the two classes do not coincide. Such an example is provided in \cite{acciaio2019extended} in a price impact model, where the optimal control is a function of the random initial condition only. \cite{frikha2023actor} considers optimising over a class Gaussian random controls, where the mean is parametrised by a neural network.  
\end{enumerate}~

The rest of the paper is organised as follows. In \Cref{sec:MFCproblem} we outline the probabilistic setting for the extended MFC problem and the properties associated with the value function. In \Cref{sec:finitedimapprox} we give the finite-dimensional approximation for the value function of the mean field problem, which forms the basis for our numerical scheme. In \Cref{sec:numerics} we present the numerical scheme and the computational steps in detail. Finally we conclude in \Cref{sec:experiments} with three numerical experiments.

\section{Extended mean field control problem}\label{sec:MFCproblem}
\subsection{Probabilistic framework}
Let $(\Omega, \mathcal{F}, \PP)$ be a complete probability space supporting an $m$-dimensional Brownian motion $W = (W_t)_{t\geq0}$. Let $\mathbb{F}^W = (\mathcal{F}^W_t)_{t\geq0}$ denote the $\PP$-completion of the natural filtration generated by $W$, which satisfies the usual conditions. %Since we are going to work with random initial conditions,
We assume there exists a sub-$\sigma$-algebra of $\mathcal{G} \subseteq \mathcal{F}$ such that $\mathcal{G}$ and $\mathcal{F}^W_{\infty}$ are independent, and that there exists an $\mathcal{G}$-measurable random variable $U_{\mathcal{G}}:\Omega \to \R$ with uniform distribution on $[0,1]$. The assumptions on $\mathcal{G}$
%translate into the property that $\mathcal{G}$ is rich enough to guarantee that, 
imply that for every $\mu\in\mathcal{P}_2(\mathbb{R}^d)$, there exists a random variable $\xi \in L^2(\Omega,\mathcal{G},\PP;\mathbb{R}^d)$ such that $\Lcal(\xi) = \mu$
%, i.e. $\mathcal{P}_2(\R^d)=\{\Lcal(\xi) \, | \, \xi\in L^2(\Omega,\mathcal{G},\PP;\R^d)\}$
(cf. \cite[Lemma 2.1]{cosso2023optimal}). Finally, denote by $\mathbb{F} = (\F_t)_{t\geq 0}$ the filtration with
$\F_t = \mathcal{G} \vee \mathcal{F}^W_t$, which satisfies the usual conditions.

\subsection{The control problem}
We consider a finite horizon optimal control problem. Given a non-empty Polish space $(A,d_A)$, let $\A$ denote the set of $\mathbb{F}$-progressively measurable processes $\alpha: [0,T]\times \Omega \to A$. We write ${\Pcal_2^{X,\A}:=\Pcal_2(\R^{d} \times A)}$.  Let $[b,\sigma] : [0,T] \times \R^{d} \times A \times \Pcal_2^{X,\A} \to [\R^{d},\R^{d \times m}]$. For an initial time $t\in[0,T]$, we consider the state equation governed by a stochastic differential equation of McKean-Vlasov type, with the additional dependence on the \textit{joint law of the state and the control}, given by:
\begin{align}\label{SDE} 
    \de X_s= b\big(s, X_s, \alpha_s, \Lcal(X_s,\alpha_s)\big)\ \de s + \sigma\big(s, X_s, \alpha_s, \Lcal(X_s,\alpha_s)\big)\ \de W_s,\ X_t = \xi,
\end{align}
for $s\in(t,T]$, where $\xi\in L^2(\Omega,\F_t,\PP;\mathbb{R}^d)$ and $\alpha\in\A$. To emphasise the dependence of $X$ on $(t,\xi,\alpha)$, we will write $X^{t,\xi,\alpha}$ when needed. 

\begin{remark}
    The space $\Pcal_2(\R^d)$, as well as the counterpart $L^2(\Omega,\F_t,\PP;\R^d)$ for the initial condition, can be generalized to $\Pcal_q(\R^d)$, resp. $L^q(\Omega,\F_t,\PP;\R^d)$, for any $q\geq1$ and the results of the current section still hold true. 
\end{remark}

We impose the following assumptions on the coefficients to guarantee the existence and uniqueness of solutions to \Cref{SDE}. For simplicity, we use $\lvert\cdot\rvert$ to denote both the Euclidean norm on $\R^d$ as well as the Frobenius norm of an $\R^{d\times m}$ matrix. 

\begin{ass}\label{ass:ex_uniq_SDE}~\\ \vspace{-1em}
    \begin{enumerate}[label=(\roman*)]
        \item $b,\sigma$ are bounded and continuous.
        \item For all $(t,(x,\pr{x}),(\nu,\pr{\nu}),a)\in[0,T]\times (\R^d)^2 \times \big(\Pcal_2^{X,\A}\big)^2 \times A$, there exists a constant $C_{\mathrm{Lip}}\geq0$ such that
        \begin{align*}
            \lvert [b,\sigma](t,x,a,\nu) - [b,\sigma](t, \pr{x},a, \pr{\nu}) \rvert
            \leq\ C_{\mathrm{Lip}} \big(\lvert x - \pr{x} \rvert + \W_2(\nu, \pr{\nu}) \big).
        \end{align*}
        % \begin{align*}
        %     \lvert [b,\sigma](t,x,a,\nu) - [b,\sigma](t, \pr{x},a, \pr{\nu}) \rvert
        %     \leq\ C_{\mathrm{Lip}} \big(\lvert x - \pr{x} \rvert + \W_2(\nu_x, \pr{\nu}_x) \big),
        % \end{align*}
        % where $\nu_x \in \Pcal_2(\R^{d})$ denotes the marginal of $\nu$ on the state, i.e. $\nu_x(\cdot) = \nu(\cdot \times A)$.
    \end{enumerate}
\end{ass}

\begin{prop}\label{prop:solmkvsde}
    Given \Cref{ass:ex_uniq_SDE}, for any $t \in [0,T]$, ${\alpha \in \A}$, and $\xi,\pr{\xi}\in L^2(\Omega,\F_t,\PP;\mathbb{R}^d)$, there exists a unique, indistinguishable solution $(X^{t,\xi,\alpha}_r)_{r\in[t,T]}$ to \eqref{SDE}, satisfying
    \begin{align*}\label{estimate_1:exuniq}
        \E\Bigg[\sup_{r\in[t,T]}{\lvert X_r^{t,\xi,\alpha} \rvert^2}\Bigg]\leq C_1\big(1+\E[|\xi|^2]\big),
    \end{align*}
    for a positive constant $C_1$ independent of $t,\xi,\alpha$. Also, for all $ s \in [t,T]$,
    \begin{equation}\label{estimate_2:exuniq}
        \E\Bigg[\sup_{r\in[s,T]}{\lvert X_r^{t,\xi,\alpha}-X_r^{s,\pr{\xi},\alpha} \rvert^2}\Bigg]\leq C_2\Big(\E[|\xi-\pr{\xi}|^2] + \big(1+\E[\lvert \xi \rvert^2]+\E[\lvert \pr{\xi} \rvert^2]\big)\lvert s-t \rvert\Big),
    \end{equation}
    for a positive constant $C_2$ independent of $t,s,\xi,\pr{\xi},\alpha$. Moreover, the flow property holds, that is, for every $t \leq r \leq s \leq T$,
    \begin{align*}
        X_r^{t,\xi,\alpha} = X_r^{s,X_s^{t,\xi,\alpha},\alpha}, \ \PP\text{-a.s.}
    \end{align*}
\end{prop}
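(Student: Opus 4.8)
The plan is to establish existence, uniqueness, and the two moment estimates via a standard Picard fixed-point argument in the space of square-integrable, $\mathbb{F}$-progressively measurable processes with continuous paths, noting that the McKean--Vlasov coupling here is of a relatively benign form since $b$ and $\sigma$ are Lipschitz in the state variable \emph{and} in the law only through the $\W_2$-distance of the state-marginal, with no Lipschitz dependence on the control argument $\alpha$ beyond continuity. First I would fix $t$, $\xi$, and $\alpha\in\A$, and set up the map $\Phi$ sending a candidate process $Y$ to the solution of the SDE obtained by freezing the law $\Lcal(Y_s,\alpha_s)$ inside the coefficients; since $\nu\mapsto \nu_x$ is a $1$-Lipschitz projection and $\W_2(\Lcal(Y_s)) \le \big(\E|Y_s - Y'_s|^2\big)^{1/2}$, the law-dependence collapses to a Lipschitz-in-$L^2$ dependence, so $\Phi$ is well-defined by classical SDE theory (boundedness and continuity of the frozen coefficients suffice) and one shows $\Phi$ is a contraction on $[t,t+h]$ for $h$ small via Burkholder--Davis--Gundy and Gr\"onwall, then iterates over a partition of $[t,T]$ to get a global unique solution.

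Next I would derive the a priori bound \eqref{estimate_1:exuniq}: apply It\^o to $|X_r^{t,\xi,\alpha}|^2$, take suprema and expectations, use BDG on the stochastic integral, and crucially exploit that $b$ and $\sigma$ are \emph{bounded} (Assumption~\ref{ass:ex_uniq_SDE}(i)), which makes this estimate almost immediate --- the drift and diffusion contribute at most $C(1+T)$ terms, giving $\E[\sup_{r}|X_r|^2]\le C_1(1+\E|\xi|^2)$ with $C_1$ independent of $t,\xi,\alpha$. For the stability estimate \eqref{estimate_2:exuniq}, I would write $X_r^{t,\xi,\alpha} - X_r^{s,\xi',\alpha}$ for $r\in[s,T]$, split the difference of the initial data at time $s$ (namely $X_s^{t,\xi,\alpha} - \xi'$) using the flow property to reduce to comparing two solutions started at time $s$, and then estimate $\E|X_s^{t,\xi,\alpha}-\xi|^2 \le C(1+\E|\xi|^2)|s-t|$ using boundedness of the coefficients again to control the increment over $[t,s]$. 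The resulting differences in the coefficients involve $|X_r - X'_r|$ and $\W_2$ of the respective state-marginals, both bounded by $L^2$-norms of the path difference, so a Gr\"onwall argument on $e(r):=\E[\sup_{u\in[s,r]}|X_u^{t,\xi,\alpha}-X_u^{s,\xi',\alpha}|^2]$ closes the estimate. Finally, the flow property $X_r^{t,\xi,\alpha}=X_r^{s,X_s^{t,\xi,\alpha},\alpha}$ follows from uniqueness: both processes solve the same SDE on $[s,T]$ with the same initial datum at time $s$ and, importantly, the \emph{same} frozen flow of laws (since the law $\Lcal(X_u,\alpha_u)$ is already determined by the global solution), hence they coincide by the uniqueness just proved.

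The main obstacle is the McKean--Vlasov self-consistency in the contraction step: the law appearing in the coefficients is the law of the very process being solved for, so one cannot simply invoke classical SDE well-posedness --- one must carefully verify that the fixed-point map on path space is a contraction in the $L^2$-sup norm, controlling the $\W_2$-term by $\W_2^2(\Lcal(Y_s),\Lcal(Z_s)) \le \E|Y_s-Z_s|^2$ uniformly in $s$, and that the measurability of $s\mapsto \Lcal(Y_s,\alpha_s)$ and continuity of the coefficients make the frozen SDE genuinely solvable. A secondary subtlety is handling the flow property and the stability estimate simultaneously with the random (and merely $\F_t$-measurable, not deterministic) initial conditions, which is why the richness assumption on $\mathcal{G}$ and the filtration structure $\F_t=\mathcal{G}\vee\mathcal{F}^W_t$ matter --- but since $b,\sigma$ have no Lipschitz dependence on $\alpha$, the control can be treated as a fixed exogenous input throughout and does not interact with the fixed-point iteration, which keeps all constants independent of $\alpha$ as claimed.
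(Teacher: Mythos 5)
Your proposal is correct and follows exactly the standard Picard/fixed-point, BDG--Gr\"onwall route that the paper itself delegates to the cited references (Cosso--Pham and Cosso et al.), so there is no substantive difference in approach. The only presentational quibble is that you invoke the flow property inside the derivation of \eqref{estimate_2:exuniq} before proving it, but since the flow property rests solely on the uniqueness established in your first step, the logical order is easily rearranged and nothing is actually missing.
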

\begin{proof}
The proof relies on standard arguments; see e.g. \cite[Lemma 2.1, Remark 2.3, Proposition 3.3]{cosso_pham_2019} and \cite[Proposition 2.10]{cosso2023optimal}.
\end{proof}

 Let $f : [0,T] \times \R^{d} \times A \times \Pcal_2^{X,\A} \to \R$ and $g :\R^{d} \times \Pcal_2(\R^{d}) \to \R$. To define the mean field control problem, for a given admissible control $\alpha\in\mathcal{A}$ we consider the functional
\begin{align*}
    J(t,\xi,\alpha) = \E \Bigg[ \int^T_t f\big(s, X_s, \alpha_s, \Lcal(X_s,\alpha_s)\big)\ \de s + g\big(X_T, \Lcal(X_T)\big) \Bigg].
\end{align*}
We assume the following conditions apply to the running and terminal payoff functionals.
\begin{ass}\label{ass:reward}~\\ \vspace{-1em}
%The following holds:
    \begin{enumerate}[label=(\roman*)]
        \item $f,g$ are bounded and continuous.
        \item For all $(t,(x,\pr{x}),(\mu,\pr{\mu}),(\nu,\pr{\nu}),a)\in[0,T]\times (\R^d)^2 \times \big(\mathcal{P}_2(\R^d)\big)^2\times\big(\Pcal_2^{X,\A}\big)^2 \times A$, there exists a constant $\pr{C}_{\mathrm{Lip}}\geq0$ such that
        \begin{align*}
            &\hspace{-2cm} \lvert f(t,x,a,\nu) - f(t, \pr{x},a, \pr{\nu}) \rvert + \lvert g(x,\mu)-g(\pr{x},\pr{\mu}) \rvert \\
            &\leq\ \pr{C}_{\mathrm{Lip}}\big(\lvert x - \pr{x} \rvert + \W_2(\nu, \pr{\nu}) + \W_2(\mu, \pr{\mu}) \big).
        \end{align*}
        % \begin{align*}
        %     &\lvert f(t,x,a,\nu) - f(t, \pr{x},a, \pr{\nu}) \rvert + \lvert g(x,\mu)-g(\pr{x},\pr{\mu}) \rvert \\
        %     \leq\ & \pr{C}_{\mathrm{Lip}}\big(\lvert x - \pr{x} \rvert + \W_2(\nu_x, \pr{\nu}_x) + \W_2(\mu, \pr{\mu}) \big).
        % \end{align*}
    \end{enumerate}
\end{ass}

\begin{remark}
    The boundedness conditions on $b,\sigma,f,g$ required in \Cref{ass:ex_uniq_SDE,ass:reward} can be relaxed with some standard growing conditions; see for example \cite[Assumptions $(\mathbf{A}_{A,b,\sigma})$, $(\mathbf{A}_{f,g})$, $(\mathbf{A}_{f,g})_{cont}$]{cosso2023optimal} or \cite[Assumption $(\mathbf{A1})$]{cosso_pham_2019}.
    In addition, the results of the current section hold true by considering a weaker Lipschitz assumption for the measure argument, that is, for all $(t,(x,\pr{x}),(\nu,\pr{\nu}),a)\in[0,T]\times (\R^d)^2 \times \big(\Pcal_2^{X,\A}\big)^2 \times A$, there exists a constant $\bar{C}_{\mathrm{Lip}}\geq0$ such that
    \begin{align*}
        \lvert [b,\sigma, f](t,x,a,\nu) - [b,\sigma, f](t, \pr{x},a, \pr{\nu}) \rvert
        \leq\ \bar{C}_{\mathrm{Lip}} \big(\lvert x - \pr{x} \rvert + \W_2(\nu_x, \pr{\nu}_x) \big),
    \end{align*}
    where $\nu_x \in \Pcal_2(\R^{d})$ denotes the marginal of $\nu$ on the state, i.e., $\nu_x(\cdot) = \nu(\cdot \times A)$.
    The intention is to retain these items to facilitate the exposition as a whole.
\end{remark}

Thanks to the assumptions above, the so-called \emph{lifted value function}, given by
\begin{align*}
    V(t,\xi) := \inf_{\alpha \in \A}J(t,\xi,\alpha)
\end{align*}
is well-defined. We now recall some important properties of the lifted value function, which will be employed in \Cref{sec:finitedimapprox}.

\begin{prop}\label{lemma:propertiesliftedvf}
    Under \Cref{ass:ex_uniq_SDE,ass:reward} the lifted value function $V$ satisfies the following.
    \begin{enumerate}
        \item $V$ is bounded.
        \item $V$ is jointly continuous, that is $V(t_n,\xi_n) \to V(t,\xi)$ as $n\to\infty$ for every couple $(t,\xi)\in[0,T]\times L^2(\Omega,\F_t,\PP;\R^d)$ and sequence $\{(t_n,\xi_n)\}_{n\in\N_+}$ where $t_n\in[0,T]$ and $\xi_n\in L^2(\Omega,\F_{t_n},\PP;\R^d)$ for all $n\in\mathbb{N}_+$, such that $|t_n-t|+\E[|\xi_n-\xi|^2]\to0$ as $n\to\infty$.
        \item $V$ is Lipschitz in the second argument, i.e., there exists $L\geq0$, independent of $t,\xi,\pr{\xi}$, such that
        \begin{align*}
             |V(t,\xi)-V(t,\pr{\xi})|\leq L\E[|\xi-\pr{\xi}|^2]^{1/2}, \hspace{10pt}\forall t\in[0,T], \xi,\pr{\xi}\in L^2(\Omega,\F_t,\PP;\R^d).
        \end{align*}
    \end{enumerate}
\end{prop}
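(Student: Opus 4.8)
The plan is to derive all three assertions from the elementary fact that $\big|\inf_{\alpha\in\A}\phi(\alpha)-\inf_{\alpha\in\A}\psi(\alpha)\big|\le\sup_{\alpha\in\A}|\phi(\alpha)-\psi(\alpha)|$, so that in each case it suffices to control the functional $J$ uniformly over admissible controls. For the first item, the boundedness of $f$ and $g$ in \Cref{ass:reward}(i), say $|f|\le M_f$ and $|g|\le M_g$, immediately gives $|J(t,\xi,\alpha)|\le M_fT+M_g$ for every $(t,\xi,\alpha)$, and taking the infimum over $\alpha$ yields boundedness of $V$. For the third item I would fix $t\in[0,T]$, $\xi,\pr\xi\in L^2(\Omega,\F_t,\PP;\R^d)$ and $\alpha\in\A$, write $X=X^{t,\xi,\alpha}$, $\pr X=X^{t,\pr\xi,\alpha}$, and estimate $|J(t,\xi,\alpha)-J(t,\pr\xi,\alpha)|$ via the Lipschitz bounds of \Cref{ass:reward}(ii). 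Here the state-marginal of $\Lcal(X_s,\alpha_s)$ is $\Lcal(X_s)$, and the coupling $(X_s,\pr X_s)$ gives $\W_2(\Lcal(X_s),\Lcal(\pr X_s))\le\E[|X_s-\pr X_s|^2]^{1/2}$, so the difference is controlled by a constant multiple of $\E\big[\sup_{r\in[t,T]}|X_r-\pr X_r|^2\big]^{1/2}$, which by \eqref{estimate_2:exuniq} with $s=t$ is at most $\sqrt{C_2}\,\E[|\xi-\pr\xi|^2]^{1/2}$. Since this bound is independent of $\alpha$, the Lipschitz property follows with $L=2C_{\mathrm{Lip}}(T+1)\sqrt{C_2}$.

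The substantive part is the second item, where I would again aim to prove $\sup_{\alpha\in\A}|J(t_n,\xi_n,\alpha)-J(t,\xi,\alpha)|\to0$ along the given sequence, using that $\A$ does not depend on the initial time so the comparison is over a common $\alpha$. Assume first $t_n\le t$ (the case $t_n>t$ being symmetric, after exchanging the roles of $(t,\xi)$ and $(t_n,\xi_n)$). I would split off the integral of $f$ over $[t_n,t]$, bounded by $M_f|t-t_n|$ by boundedness, and for the remaining $[t,T]$-part invoke the flow property of \Cref{prop:solmkvsde}, $X_r^{t_n,\xi_n,\alpha}=X_r^{t,X_t^{t_n,\xi_n,\alpha},\alpha}$ for $r\ge t$, together with \eqref{estimate_2:exuniq} applied with common starting time $t$, to get $\E\big[\sup_{r\in[t,T]}|X_r^{t_n,\xi_n,\alpha}-X_r^{t,\xi,\alpha}|^2\big]\le C_2\,\E[|X_t^{t_n,\xi_n,\alpha}-\xi|^2]$. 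I would then bound $\E[|X_t^{t_n,\xi_n,\alpha}-\xi|^2]^{1/2}\le\E[|X_t^{t_n,\xi_n,\alpha}-\xi_n|^2]^{1/2}+\E[|\xi_n-\xi|^2]^{1/2}$: the second summand vanishes by hypothesis, and the first — the displacement $X_t^{t_n,\xi_n,\alpha}-X_{t_n}^{t_n,\xi_n,\alpha}$ of the solution over the short interval $[t_n,t]$ — is $O\big((1+\E[|\xi_n|^2])^{1/2}|t-t_n|^{1/2}\big)$ by the $|s-t|$ term of \eqref{estimate_2:exuniq}. Since $\sup_n\E[|\xi_n|^2]<\infty$, the whole bound tends to $0$ uniformly in $\alpha$, and feeding it into the Lipschitz bounds of \Cref{ass:reward}(ii) exactly as for the third item completes the estimate.

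I expect the main obstacle to be the time-continuity within the second item, for two intertwined reasons. First, one must compare solutions of \eqref{SDE} started at different times, which is only possible through the flow property combined with the short-time regularity of the SDE encoded in the $|s-t|$ term of \eqref{estimate_2:exuniq}, plus the separate (easy) treatment of the reward integral over the time strip between $t_n$ and $t$. Second, there is a mismatch of $\sigma$-algebras, $\xi_n\in L^2(\Omega,\F_{t_n},\PP;\R^d)$ whereas $\xi\in L^2(\Omega,\F_t,\PP;\R^d)$, so the naive decomposition $|V(t_n,\xi_n)-V(t,\xi)|\le|V(t_n,\xi_n)-V(t_n,\xi)|+|V(t_n,\xi)-V(t,\xi)|$ is unavailable when $t_n<t$ since $\xi$ need not be $\F_{t_n}$-measurable; the remedy is to avoid any intermediate value-function evaluation and bound $|J(t_n,\xi_n,\alpha)-J(t,\xi,\alpha)|$ directly and uniformly in $\alpha$. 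By contrast, the McKean--Vlasov nature of \eqref{SDE} should cause no extra difficulty, since the stability estimate \eqref{estimate_2:exuniq} and the flow property in \Cref{prop:solmkvsde} already incorporate the dependence of the coefficients on $\Lcal(X_s,\alpha_s)$.
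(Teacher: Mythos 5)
Your proposal is correct and follows essentially the same route as the paper: boundedness from $\|f\|_\infty,\|g\|_\infty$, and items 2--3 from the uniform-in-$\alpha$ bound $|V-V|\le\sup_\alpha|J-J|$ combined with the product-coupling estimate $\W_2(\Lcal(X_r),\Lcal(X'_r))\le\E[|X_r-X'_r|^2]^{1/2}$ and the stability estimate \eqref{estimate_2:exuniq}. The only cosmetic differences are that your flow-property detour through $X_t^{t_n,\xi_n,\alpha}$ re-derives what a single direct application of \eqref{estimate_2:exuniq} (with $s=t$, $\pr\xi=\xi$) already gives, and that you invoke the Lipschitz hypothesis directly where the paper passes through the weaker joint-continuity consequence \eqref{eq:jointcont}; neither affects correctness.
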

\begin{proof}
    The boundedness of $V$ follows from the boundedness assumption of the coefficients $f$ and $g$.
    To prove the second claim, we first note that the Lipschitz continuity of $f,g$ implies the following joint continuity property: for any sequence $\{(x_m,\zeta_m)\}_{m\in\N}\subset\R^d\times\mathcal{P}_2(\R^d)$ converging to $(x,\zeta)\in\R^d\times\mathcal{P}_2(\R^d)$,  as $m\to\infty$, we have  \begin{align}\label{eq:jointcont}
\sup_{\substack{t\in [0,T], a\in A\\ \eta\in\mathcal{P}_2(A)}} \lvert f(t,x_m,a,\nu_m)-f(t,x,a,\nu) \rvert + \lvert g(x_m,\zeta_m)-g(x,\zeta) \rvert \xrightarrow[m\to\infty]{} 0,
    \end{align}
    where $\nu_m,\nu\in\mathcal{P}_2^{X,\A}$ have marginals $(\zeta_m,\eta)$ and $(\zeta,\eta)$, respectively.
    Let $\alpha\in\A$, and let $K\geq 0$ such that $\| f\|_\infty \leq K$. Assume without loss of generality $t_n\leq t$ (an analogous argument holds for $t_n>t$). Then, we have
    \begin{align*}
        &\ \lvert J(t_n,\xi_n,\alpha) - J(t,\xi,\alpha) \rvert \\
        =&\  \bigg| \E\bigg[\int_{t_n}^T f(r,X_r^{t_n,\xi_n,\alpha},\alpha_r,\Lcal(X_r^{t_n,\xi_n,\alpha},\alpha_r)) \de r - \int_t^T  f(r,X_r^{t,\xi,\alpha},\alpha_r,\Lcal(X_r^{t,\xi,\alpha},\alpha_r)) \de r\\
        &\phantom{{}={}} + g(X_T^{t_n,\xi_n,\alpha},\Lcal(X_T^{t_n,\xi_n,\alpha})) - g(X_T^{t,\xi,\alpha},\Lcal(X_T^{t,\xi,\alpha})) \bigg] \bigg|\\
        \leq&\ \E\bigg[\int_{t_n}^t \lvert f(r,X_r^{t_n,\xi_n,\alpha},\alpha_r,\Lcal(X_r^{t_n,\xi_n,\alpha},\alpha_r)) \rvert\ \de r\\
        &\phantom{{}={}} + \int_t^T \lvert f(r,X_r^{t_n,\xi_n,\alpha},\alpha_r,\Lcal(X_r^{t_n,\xi_n,\alpha},\alpha_r)) - f(r,X_r^{t,\xi,\alpha},\alpha_r,\Lcal(X_r^{t,\xi,\alpha},\alpha_r)) \rvert\ \de r\\
        &\phantom{{}={}} + \lvert g(X_T^{t_n,\xi_n,\alpha},\Lcal(X_T^{t_n,\xi_n,\alpha})) - g(X_T^{t,\xi,\alpha},\Lcal(X_T^{t,\xi,\alpha})) \rvert \bigg]\\
        \leq&\ K\lvert t-t_n \rvert + \E\Big[ \lvert g(X_T^{t_n,\xi_n,\alpha},\Lcal(X_T^{t_n,\xi_n,\alpha})) - g(X_T^{t,\xi,\alpha},\Lcal(X_T^{t,\xi,\alpha})) \rvert \Big]\\
        &\phantom{{}={}} + \E\bigg[\int_t^T\sup_{(r,a,\eta) \in [t,T] \times A \times \mathcal{P}_2(A)} \lvert f^{t_n, \xi_n}_{r}(a, \eta)  - f^{t, \xi}_{r}(a, \eta)  \rvert\ \de r \bigg],
    \end{align*}
    where $f^{t, \xi}_{r}(a, \eta)\coloneqq f(r, X^{t,\xi,\alpha}_r, a, \Lcal(X^{t,\xi,\alpha}_r, \eta)) $. 
     Observe that by taking the product measure $\otimes$ as a particular coupling, one has
    % To conclude, it is enough to observe that taking the product measure $\otimes$ as a particular coupling
    \begin{align*} \sup_{r\in[t,T]}\mathcal{W}_2^2\Big(\Lcal(X_r^{t_n,\xi_n,\alpha}),\Lcal(X_r^{t,\xi,\alpha})\Big) \leq \E\Bigg[ \sup_{r\in[t,T]}\lvert X_r^{t_n,\xi_n,\alpha} - X_r^{t,\xi,\alpha} \rvert^2 \Bigg],
    \end{align*}
 and by applying the estimate \eqref{estimate_2:exuniq} from \Cref{prop:solmkvsde}, it follows by the joint continuity property of $f$ and $g$ that 
 %taking the limit as $n\to\infty$ we have
    \begin{align*}
    \lvert J(t_n,\xi_n,\alpha) - J(t,\xi,\alpha) \rvert \xrightarrow[n\to\infty]{} 0,\hspace{10pt} \forall \alpha\in\A.
    \end{align*}
    Finally, taking the supremum over the control gives
    \begin{align*}
        \lvert V(t_n,\xi_n) - V(t,\xi) \rvert \leq \sup_{\alpha\in\A} \lvert J(t_n,\xi_n,\alpha) - J(t,\xi,\alpha) \rvert \xrightarrow[n\to\infty]{} 0.
    \end{align*}

    For the third claim, the proof relies on similar arguments using the estimate \eqref{estimate_2:exuniq} from \Cref{prop:solmkvsde}; see e.g. \cite[proof of Proposition 2.3]{cosso2023master}.
\end{proof}

\begin{remark}
    The joint continuity of $f,g$ stated in \eqref{eq:jointcont} is the minimal assumption to get the conclusions in \Cref{lemma:propertiesliftedvf}; note that the stronger Lipschitz continuity property we required in \Cref{ass:reward} will be needed for the finite-dimensional approximation in \Cref{sec:finitedimapprox}.
\end{remark}

Looking at the lifted value function, it turns out that it only depends on the law of $\xi$; such a property is referred to as the \emph{law invariance property}.
\begin{thm}\label{thm:LIP}
    Given \Cref{ass:ex_uniq_SDE,ass:reward}, for any $t \in [0,T]$ and $\xi, \pr{\xi} \in L^2(\Omega,\F_t,\PP;\R^{d})$ such that $\Lcal(\xi) = \Lcal(\pr{\xi})$, it holds
    \begin{align*}
        V(t, \xi) = V(t, \pr{\xi}).
    \end{align*}
    Therefore, one can define the \emph{intrinsic value function} $v: [0,T] \times \Pcal_2(\R^{d}) \to \R$ by
\begin{align}\label{intrinsic_val}
    v(t,\mu) \coloneqq V(t, \xi),\quad \forall (t,\mu) \in [0,T] \times \Pcal_2(\R^{d}),
\end{align}
where $\xi \in L^2(\Omega,\F_t,\PP;\R^{d})$, such that $\Lcal(\xi)=\mu$.
\end{thm}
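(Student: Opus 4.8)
We argue by symmetry: it suffices to prove $V(t,\pr{\xi}) \leq V(t,\xi)$, since interchanging the roles of $\xi$ and $\pr{\xi}$ then yields the reverse inequality. Fix $\alpha \in \A$ and $\varepsilon > 0$; the plan is to construct $\pr{\alpha} \in \A$ with $J(t,\pr{\xi},\pr{\alpha}) \leq J(t,\xi,\alpha) + \varepsilon$, after which taking the infimum over $\alpha \in \A$ and letting $\varepsilon \downarrow 0$ gives the claim, and \eqref{intrinsic_val} is then well defined. The guiding observation is that, by the well-posedness in \Cref{prop:solmkvsde}, $J(t,\xi,\alpha)$ depends on $(\xi,\alpha)$ only through the law of the path $\big(X^{t,\xi,\alpha}_s, \alpha_s\big)_{s\in[t,T]}$, which is a measurable image of the triple consisting of $\xi$, the $\F_t$-randomness on which $\alpha$ depends, and the post-$t$ increments $\widetilde{W}_s := W_{t+s} - W_t$. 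Since $\widetilde{W}$ is a Brownian motion independent of $\F_t$, matching the cost of $\alpha$ for the initial datum $\pr{\xi}$ amounts to reproducing that triple with $\xi$ replaced by $\pr{\xi}$.

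To make this precise I would first \emph{regularise the control}: using the continuity and boundedness of $b,\sigma,f,g$ in the control variable (\Cref{ass:ex_uniq_SDE,ass:reward}) together with the stability estimates for \eqref{SDE} from \Cref{prop:solmkvsde}, approximate $\alpha$ by $\alpha^\varepsilon \in \A$ with $\lvert J(t,\xi,\alpha^\varepsilon) - J(t,\xi,\alpha)\rvert < \varepsilon$ of the form $\alpha^\varepsilon_s = \Phi\big(s, \zeta, \widetilde{W}_{s_1\wedge(s-t)}, \ldots, \widetilde{W}_{s_k\wedge(s-t)}\big)$, where $\Phi$ is bounded and measurable, piecewise constant in time, $0 \le s_1 < \cdots < s_k \le T-t$, and $\zeta$ is an $\R^p$-valued $\F_t$-measurable random variable having $\xi$ as one of its components. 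By a randomisation (noise-outsourcing) argument, $\zeta$ can be written, in law and jointly with $\xi$ and $\widetilde{W}$, as $\zeta = h(\xi, U)$ for a measurable $h$ and a uniform random variable $U$ independent of $\xi$ and of $\widetilde{W}$. The richness of the filtration $\F_t = \mathcal{G} \vee \mathcal{F}^W_t$ — for which the hypotheses on $\mathcal{G}$ are imposed — provides a uniform, $\F_t$-measurable random variable $\pr{U}$ independent of $\pr{\xi}$, hence of $\widetilde W$ (as $\widetilde W \perp \F_t$); since $\Lcal(\xi) = \Lcal(\pr{\xi})$, this gives $\big(\pr\xi,\, h(\pr\xi, \pr U),\, \widetilde W\big) \overset{d}{=} \big(\xi,\, \zeta,\, \widetilde W\big)$. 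Setting $\pr\alpha_s := \Phi\big(s,\, h(\pr\xi,\pr U),\, \widetilde W_{s_1\wedge(s-t)}, \ldots, \widetilde W_{s_k\wedge(s-t)}\big)$ defines an admissible control for $\pr\xi$, and by the well-posedness of \eqref{SDE} one obtains $\big(X^{t,\pr\xi,\pr\alpha}_s, \pr\alpha_s\big)_{s} \overset{d}{=} \big(X^{t,\xi,\alpha^\varepsilon}_s, \alpha^\varepsilon_s\big)_{s}$, whence $J(t,\pr\xi,\pr\alpha) = J(t,\xi,\alpha^\varepsilon) \le J(t,\xi,\alpha) + \varepsilon$, as required.

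The two delicate points are the control-regularisation step — where one must control the perturbation of the McKean--Vlasov coefficients through the joint law $\Lcal(X_s,\alpha_s)$, which is absorbed by the Lipschitz and boundedness structure of \Cref{ass:ex_uniq_SDE,ass:reward} and the estimates of \Cref{prop:solmkvsde} — and the measure-theoretic transport, where one must verify that $\mathcal{G}$ (together with $\mathcal{F}^W_t$) genuinely carries enough independent randomness to realise the coupling; this is most delicate at $t=0$, and is exactly what the richness hypothesis on $\mathcal{G}$ is designed to secure. A shorter alternative, which sidesteps the transport issue, is to invoke first the standard reduction that $V(t,\xi)$ is unchanged if the infimum in its definition is restricted to controls progressively measurable with respect to the filtration generated by $\xi$ and $\widetilde W$: since $(\xi,\widetilde W) \overset{d}{=} (\pr\xi,\widetilde W)$, every such control for $\xi$ then transfers verbatim to $\pr\xi$ with the same cost, and the equality $V(t,\xi)=V(t,\pr\xi)$ follows at once. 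This is essentially the route taken in \cite{cosso_pham_2019, cosso2023optimal}, to which the details can be delegated.
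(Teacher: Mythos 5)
Your overall strategy --- transferring a (near-)optimal control from $\xi$ to $\pr{\xi}$ by reproducing the joint law of $(\xi,\alpha,W_{\cdot}-W_t)$, and using that $J(t,\xi,\alpha)$ depends on its inputs only through this law --- is exactly the strategy of the paper's proof (Steps 1--3, following \cite{cosso_pham_2019}). The gap is in the realisation step. You assert that the richness of $\F_t=\mathcal{G}\vee\F^W_t$ provides a uniform random variable $\pr{U}$ that is $\F_t$-measurable and independent of $\pr{\xi}$. The stated hypotheses on $\mathcal{G}$ (existence of \emph{some} $\mathcal{G}$-measurable uniform, and independence of $\mathcal{G}$ from $\F^W_\infty$) do not imply this, and it can genuinely fail: take $t=0$, let $\mathcal{G}=\sigma(U_{\mathcal{G}})$ up to null sets, and let $\pr{\xi}=\phi(U_{\mathcal{G}})$ for a Borel isomorphism $\phi$ from $[0,1]$ onto a Borel subset of $\R^d$ carrying Lebesgue measure to $\mu$ (possible for atomless $\mu$). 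Then $\F_0=\sigma(\pr{\xi})$ modulo null sets, so every $\F_0$-measurable random variable is a.s.\ a function of $\pr{\xi}$, and no non-degenerate one is independent of it. In this situation the exact matching $(\pr{\xi},\pr{\alpha},W)\overset{\mathcal{L}}{=}(\xi,\alpha^\varepsilon,W)$ is impossible whenever the conditional law of $\alpha^\varepsilon$ near time $0$ given $\xi$ is non-Dirac, so the identity $J(t,\pr{\xi},\pr{\alpha})=J(t,\xi,\alpha^\varepsilon)$ cannot be forced and the chain of inequalities breaks.

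The missing ingredient --- present in the reference the paper leans on --- is a two-stage argument: first establish the transfer for initial data taking finitely many values, $\xi=\sum_k x_k 1_{E_k}$ and $\pr{\xi}=\sum_k x_k 1_{E'_k}$ with $\PP(E_k)=\PP(E'_k)$; conditionally on each $E'_k$ the trace of $\F_t$ \emph{is} atomless, so the conditional uniform you need does exist there and the coupling can be built atom by atom. Then pass to general $\xi,\pr{\xi}$ by approximating them in $L^2$ by discrete variables with matching laws and invoking the Lipschitz continuity of $V$ in its second argument (\Cref{lemma:propertiesliftedvf}, item 3). Without this detour the argument is incomplete. I would also caution against your ``shorter alternative'': the claim that the infimum is unchanged when restricted to controls adapted to $\sigma(\xi)\vee\sigma(\widetilde{W})$ is not a routine reduction for McKean--Vlasov control --- whether extra $\F_t$-randomisation can lower the value is precisely the point at issue, and that reduction is essentially equivalent to (and usually deduced from) law invariance itself, so invoking it here is circular. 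Your preliminary regularisation of the control, by contrast, is harmless under the boundedness and continuity hypotheses.
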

\begin{proof}

    The proof of the result adapts the arguments presented in \cite[Theorem 3.6]{cosso2023optimal} to our setting. We sketch here the main steps and differences, referring to \cite[Section 3.3 and Appendix B]{cosso2023optimal} for technical details.\\

    {\textbf{Step 1. }}{\textit{Canonical representation. }} Suppose that there exist two $\mathcal{F}_t$-measurable random variables $U_\xi, U_{\pr{\xi}}$ with uniform distribution on $[0,1]$, being independent on $\xi,\pr{\xi}$ respectively. From \cite[Lemma B.2]{cosso2023optimal}, for any $\alpha\in\mathcal{A}$ there exists a $Prog(\mathbb{F}^{W,t})\otimes\mathcal{B}(\R^d)\otimes\mathcal{B}([0,1])$-measurable function, denoted by $\mathfrak{a}$, such that
    \begin{align*}
          &\mathfrak{a} : [0,T]\times\Omega\times\R^d\times[0,1] \to A, \quad (s, \omega, x, y) \mapsto \mathfrak{a}_s(x,y)(\omega),\\
         &\big( \xi, (\alpha_s)_{s\in[t,T]}, (W_s-W_t)_{s\in[t,T]} \big) \overset{\mathcal{L}}{=} \big( \xi, \mathfrak{a}_s(\xi,U_\xi)_{s\in[t,T]}, (W_s-W_t)_{s\in[t,T]} \big),
    \end{align*}
    where the notation $\mathfrak{a}_s(\xi,U_\xi)$ refers to the function $(s,\omega) \mapsto \mathfrak{a}_s(\xi(\omega),U_\xi(\omega))(\omega)$,
    $\overset{\mathcal{L}}{=}$ stands for equality in law on the space $(\Omega,\mathcal{F},\PP)$, and $\mathbb{F}^{W,t}=(\F^{W,t}_s)_{s\geq t}$ is the $\PP$-completition of the filtration generated by Brownian increments $(W_s-W_t)_{s\geq t}$ from time $t$ on, so that $Prog(\mathbb{F}^{W,t})$ denotes the progressive $\sigma$-field on $\Omega\times[0,T]$, relative to the filtration $\mathbb{F}^{W,t}$. Similarly, we have
    \begin{equation*}
        \big( \xi, (\alpha_s)_{s\in[t,T]}, (W_s-W_t)_{s\in[t,T]} \big) \overset{\mathcal{L}}{=} \big( \pr{\xi}, (\mathfrak{a}_s(\pr{\xi},U_{\pr{\xi}}))_{s\in[t,T]}, (W_s-W_t)_{s\in[t,T]} \big),
    \end{equation*}
    and also
    \begin{align*}
        \big( (X^{t,\xi,\alpha}_s)_{s\in[t,T]}, (\alpha_s)_{s\in[t,T]} \big) \overset{\mathcal{L}}{=} \big( (X^{t,\pr{\xi},\mathfrak{a}(\pr{\xi},U_{\pr{\xi}})}_s)_{s\in[t,T]}, (\mathfrak{a}_s(\pr{\xi},U_{\pr{\xi}}))_{s\in[t,T]} \big).
    \end{align*}

    {\textbf{Step 2. }}{\textit{Conclusions. }}  As a consequence of Step 1, we get %$J(t,\xi,\alpha) = J(t,\pr{\xi},\mathfrak{a}(\pr{\xi},U_{\pr{\xi}}))$. To conclude, it is enough to observe that 
    $J(t,\xi,\alpha) = J(t, \pr{\xi},\mathfrak{a}(\pr{\xi},U_{\pr{\xi}})) \geq V(t,\pr{\xi})$ for every admissible control $\alpha$. 
    %V(t,\xi) = \inf_{\alpha\in\A}J(t,\xi,\alpha) = \inf_{\alpha\in\A}J(t,\pr{\xi},\mathfrak{a}(\pr{\xi},U_{\pr{\xi}})) \geq \inf_{\pr{\alpha}\in\A}J(t,\pr{\xi},\pr{\alpha}) = V(t,\pr{\xi}).
    By inverting the roles of $\xi$ and $\pr{\xi}$, we get the converse inequality, from which $V(t,\xi)=V(t,\pr{\xi})$ and the intrinsic value function $v$ is well-defined. It remains to show that $U_\xi, U_{\pr{\xi}}$ exist; the discrete case where $\mathcal{L}(\xi) = \sum_{i=1}^{m}p_i\delta_{x_i}$, for $\{x_1,\dots,x_m\}\subset\R^d$, $x_i\neq x_j$ for $i\neq j$ and $p_i>0$ with $\sum_{i=1}^{m}p_i = 1$ follows from \cite[Lemma B.3]{cosso2023optimal}. Given the continuity of $\xi \mapsto V(t,\xi)$ from \Cref{lemma:propertiesliftedvf}, the case for arbitrary random variables $\xi$ and  $\pr{\xi}$ follows from a monotone convergence argument. 
    %, we proved in \Cref{lemma:propertiesliftedvf}. 
\end{proof}

The results from \Cref{lemma:propertiesliftedvf} and the law invariance property \Cref{thm:LIP}, ensure that the intrinsic value function $v$ holds the same properties for $V$:
\begin{cor}\label{cor:propertiesvf}
    Under \Cref{ass:ex_uniq_SDE,ass:reward} the value function $v$ satisfies the following.
    \begin{enumerate}
        \item $v$ is bounded.
        \item $v$ is jointly continuous (in the sense stated in \Cref{lemma:propertiesliftedvf}).
        \item $v$ is Lipschitz in the second argument, i.e. there exists $L>0$ such that
        \[
            |v(t,\mu)-v(t,\pr{\mu})|\leq L\mathcal{W}_2(\mu,\pr{\mu}), \hspace{10pt}\forall t\in[0,T], \mu,\pr{\mu}\in\mathcal{P}_2(\R^d).
        \]
    \end{enumerate}
\end{cor}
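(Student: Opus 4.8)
The plan is to deduce \Cref{cor:propertiesvf} as an essentially immediate consequence of \Cref{lemma:propertiesliftedvf} and the law invariance property in \Cref{thm:LIP}, by transferring each of the three properties of the lifted value function $V$ to the intrinsic value function $v$ through the identification $v(t,\mu) = V(t,\xi)$ with $\Lcal(\xi) = \mu$. Recall from the probabilistic framework that for every $\mu \in \Pcal_2(\R^d)$ and every $t \in [0,T]$ there does exist some $\xi \in L^2(\Omega,\F_t,\PP;\R^d)$ with $\Lcal(\xi) = \mu$ (since $\mathcal{G} \subseteq \F_t$ is rich enough), so the definition \eqref{intrinsic_val} makes sense and, by \Cref{thm:LIP}, is independent of the chosen representative $\xi$.

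First, boundedness of $v$ is trivial: $\sup_{(t,\mu)} |v(t,\mu)| = \sup_{(t,\xi)} |V(t,\xi)| < \infty$ by item 1 of \Cref{lemma:propertiesliftedvf}. Next, for the Lipschitz property in the measure argument, given $\mu,\pr{\mu} \in \Pcal_2(\R^d)$ I would pick an optimal (or near-optimal) $\W_2$-coupling: by Kantorovich duality / the existence of optimal couplings on $\Pcal_2(\R^d)$, and using that $\mathcal{G}$ supports a uniform random variable, one can construct $\xi,\pr{\xi} \in L^2(\Omega,\F_t,\PP;\R^d)$ with $\Lcal(\xi) = \mu$, $\Lcal(\pr{\xi}) = \pr{\mu}$ and $\E[|\xi - \pr{\xi}|^2]^{1/2} = \W_2(\mu,\pr{\mu})$ (or within $\varepsilon$ of it). Then item 3 of \Cref{lemma:propertiesliftedvf} gives $|v(t,\mu) - v(t,\pr{\mu})| = |V(t,\xi) - V(t,\pr{\xi})| \le L\,\E[|\xi-\pr{\xi}|^2]^{1/2} = L\,\W_2(\mu,\pr{\mu})$, and letting $\varepsilon \to 0$ if necessary yields the claim.

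For joint continuity, I would take $t_n \to t$ in $[0,T]$ and $\mu_n \to \mu$ in $\Pcal_2(\R^d)$, i.e. $|t_n - t| + \W_2(\mu_n,\mu)^2 \to 0$. The task is to produce representatives $\xi_n \in L^2(\Omega,\F_{t_n},\PP;\R^d)$ with $\Lcal(\xi_n) = \mu_n$ and a limit $\xi \in L^2(\Omega,\F_t,\PP;\R^d)$ with $\Lcal(\xi) = \mu$ such that $\E[|\xi_n - \xi|^2] \to 0$; then item 2 of \Cref{lemma:propertiesliftedvf} gives $v(t_n,\mu_n) = V(t_n,\xi_n) \to V(t,\xi) = v(t,\mu)$. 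Since $\W_2(\mu_n,\mu) \to 0$, one can take optimal couplings $\pi_n$ of $(\mu_n,\mu)$ with $\int |x-y|^2\,\pi_n(\de x,\de y) \to 0$; using the atomless $\sigma$-algebra $\mathcal{G}$ (and a measurable selection of couplings) one realises these couplings as pairs $(\xi_n,\xi)$ on $(\Omega,\mathcal{G},\PP)$ with a common second component $\xi$, so that $\E[|\xi_n - \xi|^2] = \int |x-y|^2\,\pi_n \to 0$, while $\xi_n,\xi$ are $\mathcal{G}$-measurable hence $\F_{t_n}$- and $\F_t$-measurable respectively. Invoking law invariance again, $v(t_n,\mu_n)$ does not depend on which such $\xi_n$ we chose, so the convergence along this particular construction suffices.

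The main obstacle is the simultaneous realisation, on the fixed probability space $(\Omega,\mathcal{G},\PP)$, of a sequence of optimal (or near-optimal) couplings sharing a common marginal $\xi$ — i.e. verifying that the richness hypothesis on $\mathcal{G}$ (existence of a uniform random variable and atomlessness) is genuinely enough to carry this out measurably in $n$. This is a standard but slightly delicate point; I would handle it by fixing $\xi$ first as a $\mathcal{G}$-measurable representative of $\mu$ built from $U_{\mathcal{G}}$, then for each $n$ use a regular conditional distribution / measurable selection to obtain $\xi_n$ from $\xi$ and an auxiliary independent uniform (extracted from $U_{\mathcal{G}}$ by splitting), citing the analogous constructions in \cite{cosso_pham_2019, cosso2023optimal}. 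Everything else is a direct transcription of \Cref{lemma:propertiesliftedvf} via \eqref{intrinsic_val} and \Cref{thm:LIP}.
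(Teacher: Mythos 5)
Your proposal is correct and follows exactly the route the paper intends: the paper states \Cref{cor:propertiesvf} without proof as an immediate consequence of \Cref{lemma:propertiesliftedvf} and \Cref{thm:LIP}, which is precisely your transfer of each property from $V$ to $v$ via representatives $\xi$ with $\Lcal(\xi)=\mu$. Your additional care about realising (near-)optimal couplings on $(\Omega,\mathcal{G},\PP)$ using the richness of $\mathcal{G}$ fills in details the paper omits, and is the standard argument (cf. \cite{cosso_pham_2019, cosso2023optimal}).
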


\section{Approximation of the value function}\label{sec:finitedimapprox}

% The value function of our McKean-Vlasov control problem is defined on an infinite-dimensional space since the measure $\mu$ of the initial state belongs to $\Pcal_2(\R^d)$. The main idea of the following section is to look at a mean field control problem as the limit, for $n\to\infty$, of a cooperative $n$-player stochastic differential game; hence, the resulting approximation will be employed to construct a numerical method in \Cref{sec:numerics}.\\
 As the law $\mu$ of the initial state belongs to $\Pcal_2(\R^d)$, the value function $v$  is defined on an infinite-dimensional measure space. 
The main idea of the following section is to consider the extended mean field control problem as the limit, for $N\to\infty$, of a cooperative $N$-player stochastic differential game; then, in \Cref{sec:numerics}, the resulting approximation will be employed to construct a numerical scheme.  In particular, the approximation we propose exploits the link between the intrinsic value function $v$ and the value function of a finite-dimensional cooperative differential game, denoted in the following by $\bar v_N$ or $\bar v_{\varepsilon,N}$. This link has been theoretically established in  \cite[Appendix A]{cosso2023master}, in a more restrictive framework. If compared with  \cite{cosso2023master}, we allow ourselves to work under weaker assumptions on the coefficients, as we are not concerned with the formulation of the Master Bellman equation and the uniqueness of its viscosity solutions. Moreover, we extend the result in \cite{cosso2023master} by considering the additional dependence on the joint law of state and control in both drift and diffusion. \\

We present the approximation result considering separately the case of non-degenerate and degenerate diffusion $\sigma$. By \textit{non-degenerate} we mean that there exists a constant $\theta>0$ such that for all $(t,x,a,\nu)\in[0,T]\times\R^d\times A\times\mathcal{P}_2^{X,\A}$ one has
\begin{align*}
    \sigma\sigma^{\top}(t,x,a,\nu) \geq \theta \mathrm{I}_d,
\end{align*}
where $\mathrm{I}_d$ represents the $d$-dimensional identity matrix. In the case of a non-degenerate diffusion, we directly approximate by means of a cooperative game between a finite but large number of players.
This relies on a propagation of chaos argument which requires non-degeneracy in the diffusion term. When the diffusion is possibly degenerate, a regularisation argument can be employed to approximate the original problem with a non-degenerate one. Therefore, in the general case, our approximation can be summarised into two main steps:

\begin{enumerate}[label=\textbf{\Roman*.}]
    \item  (if needed) Approximation by a non-degenerate control problem.
    \item Approximation by means of a cooperative $N$-player stochastic differential game.
\end{enumerate}
 Regarding the first step, the approximation of the value function still remains defined on an infinite-dimensional space, but allows us in the second step to apply a propagation of chaos result; this motivates the general \Cref{ass:ex_uniq_SDE} to work with possibly degenerate diffusion $\sigma$. For non-degenerate diffusions, we only need the second approximation step.

\subsection{\texorpdfstring{Case 1: $\sigma$ non-degenerate}{Case 1: sigma non-degenerate}}\label{subsec:non-deg}
Given the mean field control problem, we introduce the corresponding finite-player game. Let $N\in\N_{+}$ and let $(\bar{\Omega},\bar{\F},\bar{\PP})$ be an (auxiliary) complete probability space, supporting independent $m$-dimensional Brownian motions $\bar{W}^1,\ldots,\bar{W}^N$. Consider the corresponding $\bar{\PP}$-completion of the natural filtration generated by $\bar{W}^1,\ldots,\bar{W}^N$, denoted by $\bar{\mathbb{F}}^{W}=(\bar{\F}^{W}_t)_{t\geq0}$. In analogy with the mean field control problem, we assume the existence of a sub-$\sigma$-algebra $\bar{\mathcal{G}}\subset\bar{\F}$ such that $\bar{\mathcal{G}}$ and $\bar{\F}^{W}_{\infty}$ are independent and $\mathcal{P}_2(\R^{Nd})=\{\Lcal(\bar{\xi}) \, | \, \bar{\xi}\in L^2(\bar{\Omega},\bar{\mathcal{G}},\bar{\PP};\R^{Nd})\}$; finally, let $\bar{\mathbb{F}}=(\bar{\F}_t)_{t\geq0}$ where $\bar{\F}_t = \bar{\F}^{W}_t \vee \bar{\mathcal{G}}$.\\

To consider the control problem for each player, we introduce $\bar{\mathcal{A}}^N$, the family of $\bar{\mathbb{F}}$-progressively measurable processes $\bar{\alpha}:[0,T]\times\bar{\Omega} \to A^N$, where $\bar{\alpha}=(\bar{\alpha}^1,\ldots,\bar{\alpha}^N)$. For every initial time $t\in[0,T]$, control $\bar{\alpha}\in\bar{\mathcal{A}}^N$ and initial conditions $\bar{\xi}=(\bar{\xi}^1,\ldots,\bar{\xi}^N)\in L^2(\bar{\Omega},\bar{\F}_t,\bar{\PP};\R^{Nd})$, we consider the system of fully coupled controlled stochastic differential equations
\begin{align}\label{eq:nplayer_dynamics}
    \de \bar{X}^n_s= b^n_N(s, \bar{X}_s, \bar{\alpha}_s)\ \de t + \sigma^n_N(s, \bar{X}_s, \bar{\alpha}_s)\ \de \bar{W}^n_s, \hspace{10pt} \bar{X}^n_t = \bar{\xi}^n,
\end{align}
for  $s\in[t,T]$ and $n=1,\ldots,N$, where $\bar{X}=\bar{X}^{t,\bar{\xi},\bar{\alpha}}=(\bar{X}^{1,t,\bar{\xi},\bar{\alpha}},\ldots,\bar{X}^{N,t,\bar{\xi},\bar{\alpha}})$, and  $b^n_N$,$\sigma^n_N$, $f^n_N$ and $g^n_N$ are defined, for $\mathbf{x}=(x_1,\ldots,x_N)$ and $\mathbf{a}=(a_1,\ldots,a_N)$:
\begin{align*}
    &[b^{n}_{N},\sigma^{n}_{N}, f^n_N] : [0,T]\times {\mathbb{R}^{Nd}}\times A^N \to [\mathbb{R}^d,\R^{d \times m}, \R],\quad &&g^{n}_{N} : {\mathbb{R}^{Nd}}  \to \mathbb{R} \\ &(t,\mathbf{x},\mathbf{a}) \mapsto [b,\sigma, f]\bigg(t,x_n,a_n,\frac{1}{N}\sum_{j=1}^{N}{\delta_{(x_j,a_j)}}\bigg),\quad &&\mathrm{\mathbf{x}} \mapsto g\bigg(x_n,\frac{1}{N}\sum_{j=1}^{N}{\delta_{x_j}}\bigg)    % f^{n}_{N} &: [0,T]\times{\mathbb{R}^{Nd}}\times A^N \to \mathbb{R},\quad &&(t,\mathbf{x},\mathbf{a}) \mapsto f\bigg(t,x_n,a_n,\frac{1}{N}\sum_{j=1}^{N}{\delta_{(x_j,a_j)}}\bigg),\\
    \end{align*}
for every $t\in[0,T]$ and $\bar{\mu} \in \Pcal_2(\R^{Nd})$, with $\bar{\xi}=(\bar{\xi}^1,...,\bar{\xi}^N)\sim\bar{\mu}=\mu^1\otimes\ldots\otimes\mu^N$. Then, the map
\begin{align}\label{vn_eq}
     v_{N}(t,\bar{\mu}) = \inf_{\bar{\alpha}\in\bar{\A}^N}\frac{1}{N}\sum_{n=1}^{N}\bar{\E}\Bigg[ \int_t^T f^n_N(s,\bar{X}_s,\bar{\alpha}_s)\ \de s + g^n_N(\bar{X}_T) \Bigg]
\end{align}
is well-defined (cf. \cite{djete2022extended}). 

\begin{thm}\label{thm:prop_chaos}
    Under \Cref{ass:ex_uniq_SDE,ass:reward}, and assuming $A$ is compact, let $t\in[0,T]$, $q>2$ and $\{\mu^n\}   _{n\in\N_{+}}\subset\mathcal{P}_q(\R^d)$ such that $\sup_{n\geq1}\frac{1}{N}\sum_{n=1}^{N}\int_{\R^d}\lvert x \rvert^q\mu^n(\de x)<\infty$. Then
    \begin{align*}
        \lim_{N\to\infty}\Bigg| 
v_{N}(t,\bar{\mu})-v\bigg(t,\frac{1}{N}\sum_{n=1}^{N}\mu^n\bigg) \Bigg|=0.
    \end{align*}
\end{thm}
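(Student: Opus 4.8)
Write $\mu_N:=\frac1N\sum_{n=1}^N\mu^n$ for the Cesàro mean of the initial marginals. The plan is to prove the two one-sided estimates
\[
\limsup_{N\to\infty}\big(v_N(t,\bar\mu)-v(t,\mu_N)\big)\le 0,\qquad \liminf_{N\to\infty}\big(v_N(t,\bar\mu)-v(t,\mu_N)\big)\ge 0,
\]
following the scheme of \cite[Appendix A]{cosso2023master} and importing the propagation of chaos result of \cite{djete2022extended} to handle the joint dependence on state and control. The only quantitative inputs are the Lipschitz and boundedness constants in \Cref{ass:ex_uniq_SDE,ass:reward,ass:prop_chaos}, the horizon $T$ and dimension $d$, the uniform-in-$N$ moment bounds on $\bar X^{n,t,\bar\xi,\bar\alpha}$ obtained from \eqref{estimate_2:exuniq} (and its $N$-player analogue) together with the boundedness of $b,\sigma$, and, crucially, the hypothesis $\sup_N\frac1N\sum_{n=1}^N\int_{\R^d}|x|^q\,\mu^n(\de x)<\infty$ with $q>2$: by Markov's inequality and uniform $q$-integrability this makes $\{\mu_N\}$ precompact in $(\Pcal_2(\R^d),\W_2)$ and supplies Fournier--Guillin-type bounds for empirical measures of independent, not necessarily identically distributed, samples. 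Non-degeneracy of $\sigma$ enters, as in \Cref{subsec:non-deg}, only through the propagation of chaos step.

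\textbf{Upper bound.} Fix $\varepsilon>0$. Using the non-degeneracy of $\sigma$, I would pick an $\varepsilon$-optimal control for $v(t,\mu_N)$ which, after mollification, is a feedback $\hat a:[0,T]\times\R^d\to A$ Lipschitz in the state (cf. \cite{lacker2017limit, djete2022extended}), with associated McKean--Vlasov flow $\hat X$, $\hat X_t\sim\mu_N$, and flow of measures $\hat\rho_s:=\Lcal(\hat X_s,\hat a(s,\hat X_s))$. In the $N$-player system \eqref{eq:nplayer_dynamics} take $\bar\alpha^n_s:=\hat a(s,\bar X^n_s)$ and compare $\bar X^n$ with the decoupled particle $\tilde X^n$, driven by the same $\bar W^n$ from the same $\bar\xi^n\sim\mu^n$ but with the deterministic flow $\hat\rho_s$ frozen into the coefficients. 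The $\tilde X^n$ are then independent; moreover, since the transition kernel of the frozen (non-McKean--Vlasov) SDE is affine in the initial law and $\hat a$ is a feedback in the state, $\frac1N\sum_n\Lcal(\tilde X^n_s,\hat a(s,\tilde X^n_s))=\hat\rho_s$ exactly. The empirical-measure law of large numbers then yields $\bar\E[\sup_{s}\W_2^2(\frac1N\sum_n\delta_{(\tilde X^n_s,\hat a(s,\tilde X^n_s))},\hat\rho_s)]\to0$, and a Gronwall estimate based on the Lipschitz continuity of $b,\sigma$ in \Cref{ass:ex_uniq_SDE,ass:prop_chaos} transfers this to $\bar\E[\sup_s\W_2^2(\frac1N\sum_n\delta_{(\bar X^n_s,\bar\alpha^n_s)},\hat\rho_s)]\to0$ and $\max_n\bar\E[\sup_s|\bar X^n_s-\tilde X^n_s|^2]\to0$, with rates depending only on the data above and hence uniform in $N$ and in the choice of $\hat a$. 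Feeding these into the cost \eqref{vn_eq} and using the Lipschitz continuity of $f,g$ in the measure argument gives $v_N(t,\bar\mu)\le v(t,\mu_N)+\varepsilon+o(1)$; letting $\varepsilon\downarrow0$ closes the first estimate.

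\textbf{Lower bound.} Take $\bar\alpha^{(N)}\in\bar\A^N$ that is $1/N$-optimal for $v_N(t,\bar\mu)$, with states $\bar X^{(N)}$. Along any subsequence realising $\liminf_N\big(v_N(t,\bar\mu)-v(t,\mu_N)\big)$ I would pass to a further subsequence with $\mu_N\to\mu_\infty$ in $\W_2$ and consider the empirical measures $\frac1N\sum_n\delta_{(\bar\xi^n,\,\bar X^{n,(N)},\,\Lambda^{n,(N)},\,\bar W^n)}$, where $\Lambda^{n,(N)}$ is the Young (relaxed-control) measure of $\bar\alpha^{n,(N)}$. The uniform moment bounds and boundedness of $b,\sigma$ give tightness of the state and noise components (via Kolmogorov or Aldous criteria), while compactness of $A$ (\Cref{ass:prop_chaos}(i)) gives tightness of the relaxed-control components, so the laws of these empirical measures are tight; extract a weak limit. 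Standard stability of martingale problems identifies it as the law of an admissible relaxed control for the extended MFC problem with initial law $\mu_\infty$ --- the point at which the non-degeneracy of $\sigma$ is used --- and the cooperative structure allows one to pass back to strict controls without changing the value. Lower semicontinuity of the cost (from the joint continuity in \Cref{ass:reward} and Fatou) then gives $\liminf_N v_N(t,\bar\mu)\ge v(t,\mu_\infty)$, and $v(t,\mu_\infty)=\lim_N v(t,\mu_N)$ by the $\W_2$-continuity of $v(t,\cdot)$ from \Cref{cor:propertiesvf}; since the subsequence was arbitrary, the second estimate follows, and combining the two yields the claim.

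\textbf{Main obstacle.} The new difficulty compared with \cite{cosso2023master}, where one may take all $\mu^n$ equal, is that the comparison is with the $N$-dependent value $v(t,\mu_N)$ built from non-identical marginals, so every approximation error must be controlled \emph{uniformly} over the class of initial data sharing a common bound on the Cesàro-averaged $q$-th moment; this uniformity --- precompactness of $\{\mu_N\}$ in $(\Pcal_2,\W_2)$ and the rates for empirical measures of independent, non-identically distributed samples --- is exactly what the hypothesis $q>2$ provides, and it is the crux of the proof. The remaining delicate step is the identification of the subsequential weak limit in the lower bound as an admissible control for the limiting MFC problem, which is where the non-degeneracy of $\sigma$ (and thus the applicability of the extended propagation of chaos result) is essential.
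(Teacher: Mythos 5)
The paper does not prove this statement itself: its entire ``proof'' is the citation \emph{See \cite[Theorem 3.3]{djete2022extended}}. Your sketch is a faithful outline of the two-sided argument underlying that cited result (near-optimal Lipschitz feedback plus a decoupling/propagation-of-chaos estimate for the upper bound; relaxed controls, tightness and martingale-problem identification for the lower bound), so you are taking essentially the same route as the source the paper defers to --- with the caveat that the genuinely hard steps (approximating $\varepsilon$-optimal controls by Lipschitz feedbacks in the extended setting where coefficients depend on $\Lcal(X_s,\alpha_s)$, and returning from relaxed to strict controls) are themselves only cited in your write-up, exactly as the paper cites them wholesale.
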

\begin{proof}
    See \cite[Theorem 3.3]{djete2022extended}.
\end{proof}
\begin{cor}\label{cor:limit}
    Under \Cref{ass:ex_uniq_SDE,ass:reward} and $A$ compact, let $q>2$, $(t, \mu) \in [0,T]\times\Pcal_q(\R^d)$, and let $\mu^{\otimes N}:=\mu\otimes\ldots\otimes \mu $. It holds that
    \begin{align*}
        \lim_{N\to\infty} v_{N}(t,\mu^{\otimes N}) = v(t,\mu).
    \end{align*}
\end{cor}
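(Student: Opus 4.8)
The statement is an immediate specialisation of \Cref{thm:prop_chaos}, so the plan is simply to apply that theorem to the constant sequence $\mu^n := \mu$, $n\in\N_{+}$. First I would check the hypotheses: since $(t,\mu)\in[0,T]\times\Pcal_q(\R^d)$ with $q>2$, the sequence $\{\mu^n\}_{n\in\N_{+}}$ lies in $\Pcal_q(\R^d)$, and the uniform $q$-th moment condition required there collapses, for every $N\in\N_{+}$, to
\begin{align*}
    \frac{1}{N}\sum_{n=1}^{N}\int_{\R^d}\lvert x\rvert^q\,\mu^n(\de x)=\int_{\R^d}\lvert x\rvert^q\,\mu(\de x)<\infty,
\end{align*}
the right-hand side being finite precisely because $\mu\in\Pcal_q(\R^d)$. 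In particular the bound is trivially uniform in $N$.

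Next I would match the terms in the conclusion of \Cref{thm:prop_chaos} to the present setting. With $\mu^n\equiv\mu$ one has $\bar\mu=\mu^1\otimes\cdots\otimes\mu^N=\mu^{\otimes N}$ and $\frac{1}{N}\sum_{n=1}^{N}\mu^n=\mu$, so the conclusion of \Cref{thm:prop_chaos} reads
\begin{align*}
    \lim_{N\to\infty}\bigl\lvert v_N(t,\mu^{\otimes N})-v(t,\mu)\bigr\rvert=0,
\end{align*}
which is exactly the assertion of the corollary.

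There is essentially no obstacle here beyond this bookkeeping: the corollary just records the fact that sampling the $N$-player game from i.i.d.\ initial conditions with common law $\mu$ is an admissible choice in \Cref{thm:prop_chaos}, the only point worth isolating being that a single finite $q$-th moment of $\mu$ already secures the (trivially $N$-uniform) moment bound demanded by that theorem.
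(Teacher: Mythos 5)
Your proposal is correct and coincides with the paper's intended argument: the corollary is stated without proof precisely because it is the specialisation of \Cref{thm:prop_chaos} to the constant sequence $\mu^n\equiv\mu$, for which $\bar\mu=\mu^{\otimes N}$, $\frac{1}{N}\sum_{n=1}^N\mu^n=\mu$, and the uniform $q$-th moment condition reduces to the single finite moment $\int_{\R^d}|x|^q\,\mu(\de x)<\infty$ guaranteed by $\mu\in\Pcal_q(\R^d)$. Nothing further is needed.
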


Note that the value function $v_{N}$ is still defined on an infinite-dimensional space but comes from a cooperative $N$-player game, where the coefficients of the problem take values from finite-dimensional spaces. In the following result, we characterise the value function $v_{N}$ by averaging the players' (deterministic) positions, thereby obtaining the desired finite-dimensional approximation.

\begin{thm}\label{thm:finite_approx}
    Under \Cref{ass:ex_uniq_SDE,ass:reward}, let $(t,\mu)\in[0,T]\times\mathcal{P}_2(\R^d)$ and 
    \begin{align}\label{vbarn:eq}
        \bar{v}_{N}:[0,T]\times \R^{Nd} \to \R, \quad \bar{v}_{N}(t,x_1,\ldots,x_N) := v_{N}(t, \delta_{x_1} \otimes \ldots \otimes \delta_{x_N}).
    \end{align}
    It holds that 
    \begin{align}
    \label{eq:int_repr}
        v_{N}(t,\mu^{\otimes N}) =  \int_{\R^{Nd}}\bar{v}_{N}(t,x_1,\ldots,x_N)\ \mu(\de x_1) \ldots \mu(\de x_N).
    \end{align}
\end{thm}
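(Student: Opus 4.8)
The plan is to show that the value function $v_N$ of the cooperative $N$-player game is itself an affine functional of the initial distribution $\bar\mu$ when $\bar\mu$ is a product measure, more precisely that $v_N(t,\mu^{\otimes N})$ equals the $\mu^{\otimes N}$-average of $\bar v_N(t,\cdot)$. The natural route is to exhibit two inequalities between $v_N(t,\mu^{\otimes N})$ and $\int_{\R^{Nd}}\bar v_N(t,x_1,\dots,x_N)\,\mu(\de x_1)\dots\mu(\de x_N)$.

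For the inequality $v_N(t,\mu^{\otimes N}) \le \int \bar v_N \, \de\mu^{\otimes N}$: fix a family of $\varepsilon$-optimal controls $\bar\alpha^{\mathbf{x}}\in\bar\A^N$ for each deterministic initial datum $\mathbf{x}=(x_1,\dots,x_N)$, realising $\bar v_N(t,\mathbf{x})$ up to $\varepsilon$, and "glue" them measurably along the random initial condition $\bar\xi=(\bar\xi^1,\dots,\bar\xi^N)\sim\mu^{\otimes N}$. Concretely, use that $\bar\mathcal{G}$ carries a uniform random variable independent of the Brownian motions to write $\bar\xi$ as a measurable function of that randomness, build a single admissible control $\bar\alpha$ whose restriction to $\{\bar\xi=\mathbf{x}\}$ behaves like $\bar\alpha^{\mathbf{x}}$ (this is where a measurable-selection argument for $\varepsilon$-optimal controls, as in the proof of \Cref{thm:LIP}, is invoked), condition on $\bar\xi$, and use the flow property from \Cref{prop:solmkvsde} together with independence of $\bar\xi$ and the Brownian increments to get that the conditional cost given $\bar\xi=\mathbf{x}$ equals the cost of $\bar\alpha^{\mathbf{x}}$ started from $\mathbf{x}$, hence is within $\varepsilon$ of $\bar v_N(t,\mathbf{x})$. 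Taking expectations over $\bar\xi$ and letting $\varepsilon\to0$ yields the bound.

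For the reverse inequality $v_N(t,\mu^{\otimes N}) \ge \int \bar v_N\, \de\mu^{\otimes N}$: take any $\bar\alpha\in\bar\A^N$ used to control the problem started from $\bar\xi\sim\mu^{\otimes N}$, and disintegrate. Conditioning on $\bar\xi=\mathbf{x}$ produces, by the flow property and the independence of $\bar\xi$ from $\bar W^1,\dots,\bar W^N$, an admissible control for the deterministic-initial-condition problem started at $\mathbf{x}$ whose cost is at least $\bar v_N(t,\mathbf{x})$; integrating the pointwise bound against $\mu^{\otimes N}$ and then taking the infimum over $\bar\alpha$ gives the claim. One should note that $\bar v_N$ is measurable (indeed continuous, by the same arguments as in \Cref{lemma:propertiesliftedvf} applied to the $N$-player system), so the integral on the right-hand side is well-defined.

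The main obstacle is the measurable gluing of the $\varepsilon$-optimal controls in the first inequality: one needs a jointly measurable map $\mathbf{x}\mapsto\bar\alpha^{\mathbf{x}}$ into $\bar\A^N$ and must assemble it into a single $\bar{\mathbb{F}}$-progressively measurable control respecting the initial randomness, which requires care with the canonical representation of controls and a measurable selection theorem — essentially the same machinery as Steps 1–2 of the proof of \Cref{thm:LIP}, but now parametrised by the deterministic initial position rather than used to compare two random initial conditions. The conditioning arguments (disintegration plus flow property plus independence of $\bar{\mathcal{G}}$ and $\bar{\mathcal{F}}^W_\infty$) are otherwise routine.
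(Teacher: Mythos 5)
Your overall strategy coincides with the paper's: both reduce \eqref{eq:int_repr} to the identity $v_N(t,\bar\mu)=\bar{\E}[\bar v_N(t,\bar\xi)]$ and establish it by a double inequality, with the $\geq$ direction obtained by conditioning on the initial datum and the $\leq$ direction by assembling nearly optimal controls for each deterministic starting point. The $\geq$ direction as you describe it is fine. The divergence — and the gap — is in how the $\leq$ direction is executed. You propose to glue a continuum family $\mathbf{x}\mapsto\bar\alpha^{\mathbf{x}}$ of $\varepsilon$-optimal controls along $\bar\xi\sim\mu^{\otimes N}$, and you correctly identify that this requires a jointly measurable selection of $\varepsilon$-optimal controls over $\R^{Nd}$; but you leave that selection as an acknowledged obstacle rather than resolving it. This is not a cosmetic issue: producing a $Prog(\bar{\mathbb{F}}^t)\otimes\mathcal{B}(\R^{Nd})$-measurable family of $\varepsilon$-optimizers requires a measurable selection theorem applied to the map $\mathbf{x}\mapsto\{\bar\alpha: J_N(t,\mathbf{x},\bar\alpha)\leq \bar v_N(t,\mathbf{x})+\varepsilon\}$, which in turn requires topologising $\bar\A^N$ and verifying analyticity of the relevant graphs — machinery well beyond Steps 1--2 of the proof of \Cref{thm:LIP}, which only provide a canonical representation of a \emph{given} control, not a selection.

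The paper avoids this entirely by a two-stage argument: first prove $v_N(t,\bar\mu)\leq\bar{\E}[\bar v_N(t,\bar\xi)]$ for $\bar\xi=\sum_{k=1}^K\bar x_k\mathbf{1}_{\bar E_k}$ taking only \emph{finitely many} values, where the gluing is the trivial finite concatenation $\bar\alpha=\sum_k\bar\alpha^k\mathbf{1}_{\bar E_k}$ over the $\bar\F_t$-measurable partition $\{\bar E_k\}$ and no selection theorem is needed; then pass to general $\bar\xi\in L^2(\bar\Omega,\bar\F_t,\bar\PP;\R^{Nd})$ by approximating it with simple random variables and using the fact — which the paper establishes first, and which you also note in passing — that $\bar v_N$ is bounded, jointly continuous and Lipschitz in $\bar x$, and that $v_N$ is Lipschitz in $\W_2$. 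This density-plus-continuity step is the load-bearing part of the argument and is precisely what substitutes for the measurable selection you would otherwise need. To repair your proposal, either carry out the selection argument in full, or restructure the $\leq$ direction along the paper's discretise-then-approximate lines; the latter is considerably cheaper given that the regularity of $\bar v_N$ has already been established.
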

\begin{remark}
    \Cref{eq:int_repr} simply stands for
    \begin{align*}
        v_{N}(t,\mu^{\otimes N})=\int_{\R^{Nd}}v_{N}(t,\delta_{x_1}\otimes\ldots\otimes\delta_{x_N})\ \mu(\de x_1)\ldots\mu(\de x_N),
    \end{align*}
    and the choice to write $v_{N}(t,\delta_{x_1}\otimes\ldots\otimes\delta_{x_N})$ as $\bar{v}_{N}(t,x_1,\ldots,x_N)$ is made to emphasise the fact that $\bar{v}_{N}$ lives on a finite-dimensional space.  
\end{remark}
\begin{proof}[Proof of \Cref{thm:finite_approx}]
     We report here the main steps of the proof, referring to \cite[equality (A.10)]{cosso2023master} for additional technical details. We first observe that $\bar{v}_{N}$ corresponds to the value function of the cooperative $N$-player game presented in the second approximation step, with \emph{deterministic} initial state $\bar{x}=(x_1,\ldots,x_N)$ instead of the random vector $\bar{\xi}=(\bar{\xi}^1,\ldots,\bar{\xi}^N)$. In addition, the optimal control problem involves the coefficients $b^n_N,\sigma^n_N,f^n_N,g^n_N$, which have been defined directly from $b,\sigma,f,g$, hence they satisfy \Cref{ass:ex_uniq_SDE,ass:reward}. By employing similar computations as in \Cref{lemma:propertiesliftedvf} and observing that
    \begin{align*}
        \mathcal{W}_2^2\Bigg(\frac{1}{N}\sum_{n=1}^{N}\delta_{X^n},\frac{1}{N}\sum_{n=1}^{N}\delta_{Y^n}\Bigg) \leq \bar{\E}\Bigg[\frac{1}{N}\sum_{n=1}^{N}\lvert X^n-Y^n \rvert^2\Bigg],
    \end{align*}
    for $\{X^n\}_{n=1}^{N}, \{Y^n\}_{n=1}^{N} \subset L^2(\bar{\Omega},\bar{\F},\bar{\PP};\R^d)$, it follows that $\bar{v}_{N}$, and clearly ${v}_{N}$, satisfy the properties of \Cref{cor:propertiesvf}, suitably adapted to the context. Identity \eqref{eq:int_repr} follows as a special case of the more general equality
    \begin{align*}
        v_{N}(t,\bar{\mu}) = \int_{\R^{Nd}}\bar{v}_{N}(t,x^1,\ldots,x^N)\ \bar{\mu}(\de x^1,\ldots,\de x^N), \quad \forall (t,\bar{\mu})\in[0,T]\times\mathcal{P}_2(\R^{Nd}),
    \end{align*}
    from which \eqref{eq:int_repr} follows upon choosing $\bar{\mu}=\mu^{\otimes N}$; the latter is equivalent to proving
    \begin{align}\label{eq:int_repr_expect}
        v_{N}(t,\bar{\mu})=\bar{\E}[\bar{v}_{N}(t,\bar{\xi})], \quad \forall (t,\bar{\xi})\in[0,T]\times L^2(\bar{\Omega},\bar{\F}_t,\bar{\PP};\R^{Nd})\ \mathrm{s.t.}\ \Lcal(\bar{\xi})=\bar{\mu}.
    \end{align}

    The proof of \eqref{eq:int_repr_expect} consists of showing the double inequality $v_{N}(t,\bar{\mu}) \leq \bar{\E}[\bar{v}_{N}(t,\bar{\xi})]$ and $v_{N}(t,\bar{\mu}) \geq \bar{\E}[\bar{v}_{N}(t,\bar{\xi})]$ (cf. \cite[Theorem A.7]{cosso2023master}). This is possible first considering $\bar{\xi}\in L^2(\bar{\Omega},\bar{\F}_t,\bar{\PP};\R^{Nd})$ taking only a \emph{finite number} of values, that is $\bar{\xi}=\sum_{k=1}^{K}\bar{x}_k\mathrm{1}_{\bar{E}_k}$, for some $K\in\N_+, \bar{x}_k\in\R^{Nd}$ and $\bar{E}_k\subset\sigma(\bar{\xi})$ with $\{\bar{E}_k\}_{k=i}^{K}$ being a partition of $\bar{\Omega}$. The general case $\bar{\xi}\in L^2(\bar{\Omega},\bar{\F}_t,\bar{\PP};\R^{Nd})$ can be deduced from approximation arguments, recalling that $\bar{v}_{N}$ satisfies the properties of \Cref{cor:propertiesvf}, i.e. is bounded, jointly continuous and Lipschitz in $\bar{x}$.
\end{proof}

Finally, we state the main result.
\begin{thm}\label{thm:n_approx}
    Under \Cref{ass:ex_uniq_SDE,ass:reward}, and assuming $A$ is compact, it holds that, for every $(t,\mu)\in[0,T]\times\mathcal{P}_q(\R^d)$,    $q > 2$, 
    %the map $\bar{v}_{n}$ provides a finite-dimensional approximation of the value function $v$ associated to the McKean-Vlasov control problem of \Cref{sec:MFCproblem}. In particular, we have that
    \begin{align}\label{eq:main_res}
     v(t,\mu) = \underset{N\to\infty}\lim{}\int_{\R^{Nd}}\bar{v}_{N}(t,x_1,\ldots,x_N)\ \mu(\de x_1) \ldots \mu(\de x_N) .
    \end{align}
\end{thm}
\begin{proof}
    The proof follows directly from \Cref{cor:limit} and \Cref{thm:finite_approx}.
\end{proof}

\begin{remark}\label{rem:convergencecontrolrules}
    We recall an additional important approximation result. We presented an approximation of the value function, through its finite-dimensional counterpart, which corresponds to the value function $\bar{v}_N$ of a cooperative $N$-player stochastic differential game. In \Cref{sec:numerics}, we approximate $\bar{v}_N$ by learning $\delta_N$-optimal controls of the large population cooperative control problem, for $\delta_N\searrow0$, which can be shown to converge towards an optimal control of the original mean field control problem. Note that the convergence of the control is provided in the ``weak sense'', given by measure-valued rules. We refer to the most recent result in literature \cite[Proposition 3.4]{djete2022extended}.
\end{remark}

\subsection{\texorpdfstring{Case 2: $\sigma$ degenerate}{Case 2: sigma degenerate}}\label{sigma_degen}

As remarked before, if $\sigma$ is degenerate, we first need an approximation with a non-degenerate problem in order to apply \Cref{thm:prop_chaos}. We present in detail the first approximation step and briefly sketch the second one, which is a straightforward adaptation of \Cref{subsec:non-deg}.

\subsubsection{Approximation by a non-degenerate control problem}
We consider an auxiliary complete probability space $(\hat{\Omega}, \hat{\mathcal{F}}, \hat{\PP})$ supporting a $m$-dimensional Brownian motion $\hat{W} = (\hat{W}_t)_{t\geq0}$ and a new additional $d$-dimensional Brownian motion $\hat{B} = (\hat{B}_t)_{t\geq0}$, independent of $\hat{W}$. Denote by $\hat{\mathcal{A}}$ the control process space and by $\hat{\mathbb{F}} = (\hat{\F}_t)_{t\geq 0}$ the standard filtration, both defined in analogy to \Cref{sec:MFCproblem}. Now, consider $\varepsilon>0$, $\hat{\xi}\in L^2(\hat{\Omega},\hat{\mathcal{F}}_t,\hat{\PP};\R^d)$,  $\hat{\alpha}\in\hat{\mathcal{A}}$ and denote by $\hat{X}^{\varepsilon,t,\hat{\xi},\hat{\alpha}}$ the unique solution of the non-degenerate McKean-Vlasov stochastic differential equation
\begin{align*}
    \de \hat{X}_s= b\big(s, \hat{X}_s, \hat{\alpha}_s, \Lcal(\hat{X}_s,\hat{\alpha}_s)\big)\ \de s + \sigma\big(s, \hat{X}_s, \hat{\alpha}_s, \Lcal(\hat{X}_s,\hat{\alpha}_s)\big)\ \de \hat{W}_s + \varepsilon\ \de \hat{B}_s,\ \hat{X}_t = \hat{\xi}.
\end{align*}
We then define the lifted valued function
\begin{align*}
    V_{\varepsilon}(t,\hat{\xi})\coloneqq\inf_{\hat{\alpha}\in\hat{\A}}J(t,\hat{\xi},\hat{\alpha}) = \inf_{\hat{\alpha}\in\hat{\A}}\hat{\E} \Bigg[ \int^T_t f\big(s, \hat{X}_s, \hat{\alpha}_s, \Lcal(\hat{X}_s,\hat{\alpha}_s)\big)\ \de s + g\big(\hat{X}_T, \Lcal(\hat{X}_T)\big) \Bigg].
\end{align*}
The law invariance property, as stated in \Cref{thm:LIP} holds, and we can define the intrinsic value function $ v_{\varepsilon}: [0,T] \times \Pcal_2(\R^{d})\to \R$. Clearly, the properties of \Cref{cor:propertiesvf} hold the same for $v_{\varepsilon}$. In addition, since $\hat{B}$ is independent of $\hat{W}$, we have that $v_0(t,\mu)=v(t,\mu)$, for all $(t,\mu) \in [0,T] \times \Pcal_2(\R^{d})$. We then formulate the first limit result.

\begin{lemma}\label{lemma:limit-v_eps}
    %Under \Cref{ass:ex_uniq_SDE,ass:reward}, there exists a non-negative constant $\hat{C}$, depending only on $C_{\mathrm{Lip}}, \pr C_{\mathrm{Lip}}, \dpr C_{\mathrm{Lip}}$ and $T$, such that, for every $\varepsilon\geq0$ we have
    Under \Cref{ass:ex_uniq_SDE,ass:reward}, there exists a non-negative constant $\hat{C}$, depending only on $C_{\mathrm{Lip}}, \pr C_{\mathrm{Lip}}$ and $T$, such that, for every $\varepsilon\geq0$ we have
    \begin{align*}
        \lvert v_\varepsilon(t,\mu)-v_0(t,\mu) \rvert \leq \hat{C}\varepsilon,
    \end{align*}
    for all $(t,\mu)\in[0,T]\times\mathcal{P}_2(\R^d)$.
\end{lemma}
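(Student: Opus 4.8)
The plan is to work entirely on the auxiliary space $(\hat\Omega,\hat\F,\hat\PP)$ and compare the regularised and non-regularised flows pathwise. Fix $(t,\mu)\in[0,T]\times\Pcal_2(\R^d)$ and choose $\hat\xi\in L^2(\hat\Omega,\hat\F_t,\hat\PP;\R^d)$ with $\Lcal(\hat\xi)=\mu$. For an arbitrary $\hat\alpha\in\hat\A$, let $\hat X^\varepsilon:=\hat X^{\varepsilon,t,\hat\xi,\hat\alpha}$ and $\hat X^0:=\hat X^{0,t,\hat\xi,\hat\alpha}$ be the solutions of the controlled McKean--Vlasov SDE with and without the extra term $\varepsilon\,\de\hat B_s$, both driven by the same $\hat W$, the same control $\hat\alpha$, and started from the same $\hat\xi$ at time $t$. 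Since the cost functional has the same form in both problems (only the state process changes) and taking infima over the common control set $\hat\A$ preserves estimates, it suffices to bound, uniformly in $\hat\alpha\in\hat\A$, the difference of the two running-plus-terminal costs evaluated along $\hat X^\varepsilon$ and $\hat X^0$ by $\hat C\varepsilon$; then $|V_\varepsilon(t,\hat\xi)-V_0(t,\hat\xi)|\le\hat C\varepsilon$ and hence $|v_\varepsilon(t,\mu)-v_0(t,\mu)|\le\hat C\varepsilon$.

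The core of the argument is a Gronwall estimate for $\phi(r):=\hat\E\big[\sup_{u\in[t,r]}|\hat X^\varepsilon_u-\hat X^0_u|^2\big]$. Subtracting the two SDEs, the difference $\hat X^\varepsilon-\hat X^0$ is the sum of a drift integral, a stochastic integral against $\hat W$, and the term $\varepsilon(\hat B_r-\hat B_t)$. Applying Cauchy--Schwarz to the first, the Burkholder--Davis--Gundy inequality to the second, and Doob's $L^2$-inequality to the third, together with the Lipschitz property of $b,\sigma$ from \Cref{ass:ex_uniq_SDE} and the bound $\W_2\big(\Lcal(\hat X^\varepsilon_s),\Lcal(\hat X^0_s)\big)\le\hat\E[|\hat X^\varepsilon_s-\hat X^0_s|^2]^{1/2}\le\phi(s)^{1/2}$ (the two joint laws of state and control share the same control marginal, so only the state marginals enter), one obtains $\phi(r)\le C_1\varepsilon^2+C_2\int_t^r\phi(s)\,\de s$, with $C_1,C_2$ depending only on $C_{\mathrm{Lip}}$, $T$ and the fixed dimension $d$. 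The initial conditions cancel in the difference, so no term involving $\hat\E[|\hat\xi|^2]$ appears and the constants are genuinely uniform in $t,\mu,\hat\xi,\hat\alpha$. Gronwall then yields $\sup_{r\in[t,T]}\phi(r)\le C_1e^{C_2T}\varepsilon^2$.

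Finally, invoking the Lipschitz bounds on $f$ and $g$ from \Cref{ass:reward} and using $\hat\E[|\hat X^\varepsilon_s-\hat X^0_s|]\le\phi(s)^{1/2}$ together with $\W_2\big(\Lcal(\hat X^\varepsilon_s),\Lcal(\hat X^0_s)\big)\le\phi(s)^{1/2}$ once more, the difference of the two cost functionals is bounded by
\[
C_{\mathrm{Lip}}\Big(\int_t^T 2\,\phi(s)^{1/2}\,\de s+2\,\phi(T)^{1/2}\Big)\le 2(T+1)\,C_{\mathrm{Lip}}\sqrt{C_1e^{C_2T}}\;\varepsilon=:\hat C\varepsilon,
\]
uniformly in $\hat\alpha\in\hat\A$. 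Taking the supremum over $\hat\alpha$ and then the infima in the definitions of $V_\varepsilon$ and $V_0$ completes the proof, the case $\varepsilon=0$ being trivial. The computation is entirely standard; the only point requiring care is the uniformity of $\hat C$ in $t,\mu,\hat\xi,\hat\alpha$, which is automatic since the initial data cancel in the difference and the Lipschitz constants are global, so no serious obstacle arises.
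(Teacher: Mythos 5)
Your proposal is correct and follows essentially the same route as the paper: a Gronwall/BDG stability estimate giving $\hat\E\big[\sup_{s\in[t,T]}|\hat X^{\varepsilon}_s-\hat X^{0}_s|^2\big]\le C\varepsilon^2$ uniformly in $(\hat\xi,\hat\alpha)$ (the paper outsources this step to a Krylov-type argument), followed by the Lipschitz bounds on $f,g$ and the elementary inequality for differences of infima, yielding the same constant $2C_{\mathrm{Lip}}(T+1)\sqrt{C}$. The only cosmetic difference is that you spell out the pathwise comparison that the paper cites.
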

\begin{proof}
    By employing similar arguments as in \cite[Theorem 2.5.9]{krilov} and \cite[Lemma A.5]{cosso2023master}, one gets
    \begin{align*}
        \hat{\E}\Bigg[\sup_{s\in[t,T]}\lvert X_s^{\varepsilon,t,\hat{\xi},\hat{\alpha}}-X_s^{0,t,\hat{\xi},\hat{\alpha}} \rvert^2 \Bigg] \leq C\varepsilon^2,
    \end{align*}
    where $C\equiv C(C_{\mathrm{Lip}},T)\geq0$, for every $(\hat{\xi},\hat{\alpha})\in L^2(\hat{\Omega},\hat{\F}_t,\hat{\PP};\R^{d})\times\hat{\A}$. Finally, with standard techniques as the ones shown in \Cref{lemma:propertiesliftedvf}, we have
    \begin{align*}
        \lvert v_\varepsilon(t,\mu)-v_0(t,\mu) \rvert \leq 2\pr C_{\mathrm{Lip}}(T+1)\sqrt{C\varepsilon^2}=: \hat{C}\varepsilon.
    \end{align*}
\end{proof}

\subsubsection{Approximation by a cooperative stochastic differential game of finitely many players}
Let $N\in\N_+$ and let $(\tilde{\Omega},\tilde{\F},\tilde{\PP})$ an auxiliary complete probability space, supporting $\tilde{W}^1,\ldots,\tilde{W}^N$ independent $m$-dimensional Brownian motions, $\tilde{B}^1,\ldots,\tilde{B}^N$ independent $d$-dimensional Brownian motions. Finally define $\tilde{\mathbb{F}}=(\tilde{\F}_t)_{t\geq0}$ according to \Cref{subsec:non-deg}.\\

We introduce $\tilde{\mathcal{A}}^N$ the family of $\tilde{\mathbb{F}}$-progressively measurable processes $\tilde{\alpha}:[0,T]\times\tilde{\Omega} \to A^N$, where ${\tilde{\alpha}=(\tilde{\alpha}^1,\ldots,\tilde{\alpha}^N)}$. For every initial time $t\in[0,T]$, control $\tilde{\alpha}\in\tilde{\mathcal{A}}^N$ and initial conditions $\tilde{\xi}=(\tilde{\xi}^1,\ldots,\tilde{\xi}^N)\in L^2(\tilde{\Omega},\tilde{\F}_t,\tilde{\PP};\R^{Nd})$, we consider the system of fully coupled controlled stochastic differential equations
\begin{align*} 
    \de \tilde{X}^i_s= b^n_N(s, \tilde{X}_s, \tilde{\alpha}_s)\ \de t + \sigma^n_N(s, \tilde{X}_s, \tilde{\alpha}_s)\ \de \tilde{W}^n_s + \varepsilon\ \de \tilde{B}^n_s, \hspace{10pt} \tilde{X}^n_t = \tilde{\xi}^n,
\end{align*}
for  $s\in[t,T]$ and $n=1,\ldots,N$, where $\tilde{X}=\tilde{X}^{t,\tilde{\xi},\tilde{\alpha}}=(\tilde{X}^{1,t,\tilde{\xi},\tilde{\alpha}},\ldots,\tilde{X}^{N,t,\tilde{\xi},\tilde{\alpha}})$, and the coefficients $b^n_N,\sigma^n_N,f^n_N,g^n_N$ are defined as in \Cref{subsec:non-deg}. Again, for every $t\in[0,T]$ and $\tilde{\mu} \in \Pcal_2(\R^{Nd})$, with $\tilde{\xi}=(\tilde{\xi}^1,...,\tilde{\xi}^N)\sim\tilde{\mu}=\tilde{\mu}^1\otimes\ldots\otimes\tilde{\mu}^N$, it is well defined 
\begin{align*}
     v_{\varepsilon,N}(t,\tilde{\mu}) = \inf_{\tilde{\alpha}\in\tilde{\A}^N}\frac{1}{N}\sum_{n=1}^{N}\tilde{\E}\Bigg[ \int_t^T f^n_N(s,\tilde{X}_s,\tilde{\alpha}_s)\ \de s + g^n_N(\tilde{X}_T) \Bigg]
\end{align*}

According to \Cref{thm:prop_chaos} the following lemma holds.
\begin{lemma}\label{lemma:limit-v_eps_n}
    Under \Cref{ass:ex_uniq_SDE,ass:reward} and $A$ compact, let $(t, \mu) \in [0,T]\times\Pcal_q(\R^d)$, $q > 2$,  then it holds that
    \begin{align*}
        \lim_{n\to\infty} v_{\varepsilon,N}(t,\mu^{\otimes N}) = v_{\varepsilon}(t,\mu).
    \end{align*}
\end{lemma}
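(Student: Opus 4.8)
The plan is to recognise the $\varepsilon$-regularised mean field control problem as an instance of the (non-regularised) extended MFC problem treated in \Cref{sec:MFCproblem,subsec:non-deg}, obtained simply by enlarging the driving noise, and then to apply \Cref{cor:limit} verbatim to this enlarged problem. Concretely, I would regard $(\hat{W},\hat{B})$ as a single $(m+d)$-dimensional Brownian motion, keep the drift $b$ unchanged, and replace $\sigma$ by the augmented diffusion coefficient $\hat{\sigma}(t,x,a,\nu):=(\sigma(t,x,a,\nu)\mid\varepsilon\,\mathrm{I}_d)\in\R^{d\times(m+d)}$, where $(\,\cdot\mid\cdot\,)$ denotes horizontal concatenation of matrices. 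Then the regularised state equation is an instance of \eqref{SDE} with $m$ replaced by $m+d$, and, exactly as the excerpt already does via \Cref{thm:LIP} applied with the filtration $\hat{\mathbb{F}}$, the intrinsic value function $v_\varepsilon$ is the one attached to the quadruple $(b,\hat{\sigma},f,g)$.

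The next step is to verify that $(b,\hat{\sigma},f,g)$ still satisfies \Cref{ass:ex_uniq_SDE,ass:reward,ass:prop_chaos}. Since $b$, $f$, $g$ are untouched and $A$ is compact by assumption, the only point to check is that $\hat{\sigma}$ is bounded, continuous, and Lipschitz in the state and measure arguments in the senses of \Cref{ass:ex_uniq_SDE}(ii) and \Cref{ass:prop_chaos}(ii). This is immediate: the appended block $\varepsilon\mathrm{I}_d$ is constant, so boundedness and continuity pass from $\sigma$ to $\hat{\sigma}$, and for any two inputs differing only in the state/measure slots the appended blocks cancel in the difference, so that $|\hat{\sigma}(t,x,a,\nu)-\hat{\sigma}(t,\pr{x},a,\pr{\nu})|_{\R^{d\times(m+d)}}=|\sigma(t,x,a,\nu)-\sigma(t,\pr{x},a,\pr{\nu})|_{\R^{d\times m}}$ (and similarly when only $\nu$ varies), whence $\hat{\sigma}$ carries over the Lipschitz constant $C_{\mathrm{Lip}}$ of $\sigma$. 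One also observes $\hat{\sigma}\hat{\sigma}^{\top}=\sigma\sigma^{\top}+\varepsilon^2\mathrm{I}_d\geq\varepsilon^2\mathrm{I}_d$, i.e. the enlarged problem is non-degenerate for $\varepsilon>0$, which is what makes the propagation-of-chaos step legitimate here even though $\sigma$ itself may be degenerate.

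Finally, I would match up the $N$-player objects. Forming the cooperative game of \Cref{subsec:non-deg} out of $(b,\hat{\sigma})$ produces the particle coefficients $b^n_N$ and $\hat{\sigma}^n_N(t,\mathbf{x},\mathbf{a})=(\sigma^n_N(t,\mathbf{x},\mathbf{a})\mid\varepsilon\mathrm{I}_d)$, and driving particle $n$ by the enlarged noise $(\tilde{W}^n,\tilde{B}^n)$ reproduces exactly the system $\de\tilde{X}^n_s=b^n_N(s,\tilde{X}_s,\tilde{\alpha}_s)\,\de s+\sigma^n_N(s,\tilde{X}_s,\tilde{\alpha}_s)\,\de\tilde{W}^n_s+\varepsilon\,\de\tilde{B}^n_s$ used to define $v_{\varepsilon,N}$; hence $v_{\varepsilon,N}$ coincides with the $N$-player value function $v_N$ associated with $(b,\hat{\sigma},f,g)$. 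Applying \Cref{cor:limit} to this quadruple then yields $\lim_{N\to\infty}v_{\varepsilon,N}(t,\mu^{\otimes N})=v_\varepsilon(t,\mu)$ for every $(t,\mu)\in[0,T]\times\Pcal_q(\R^d)$, $q>2$, the moment hypothesis of \Cref{thm:prop_chaos} being trivially met by $\mu^{\otimes N}$ since each of its marginals equals $\mu\in\Pcal_q(\R^d)$. I do not expect a genuine obstacle: the substance of the lemma is contained in \Cref{thm:prop_chaos}, and the only care required is the bookkeeping that the constant augmentation of the diffusion preserves every structural hypothesis and that the enlarged particle system is literally the $\varepsilon$-regularised one.
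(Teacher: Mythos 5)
Your proposal is correct and follows exactly the route the paper takes: the paper proves this lemma simply by observing that it is an instance of \Cref{thm:prop_chaos} (equivalently \Cref{cor:limit}) applied to the regularised, now non-degenerate, problem. Your additional bookkeeping --- augmenting the diffusion to $\hat{\sigma}=(\sigma\mid\varepsilon\mathrm{I}_d)$, checking that \Cref{ass:ex_uniq_SDE,ass:reward,ass:prop_chaos} are preserved, and noting $\hat{\sigma}\hat{\sigma}^{\top}\geq\varepsilon^2\mathrm{I}_d$ --- is exactly the verification the paper leaves implicit.
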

 Moreover, analogously to \Cref{thm:finite_approx}, one has the following representation for  $v_{\varepsilon,N}$.
\begin{thm}\label{thm:int-repr-v_eps_n}
    Under \Cref{ass:ex_uniq_SDE,ass:reward}, let $(t,\mu)\in[0,T]\times\mathcal{P}_2(\R^d)$ and consider the map
    \begin{align*}
        \bar{v}_{\varepsilon,N}:[0,T]\times\R^{Nd}, \quad \bar{v}_{\varepsilon,N}(t,x_1,\ldots,x_N) := v_{\varepsilon,N}(t, \delta_{x_1} \otimes \ldots \otimes \delta_{x_N}).
    \end{align*}
    Then it holds that 
    \begin{align*}
        v_{\varepsilon,N}(t,\mu^{\otimes N}) =  \int_{\R^{Nd}}\bar{v}_{\varepsilon,N}(t,x_1,\ldots,x_N)\ \mu(\de x_1) \ldots \mu(\de x_N).
    \end{align*}
\end{thm}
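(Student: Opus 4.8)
The plan is to transcribe the proof of \Cref{thm:finite_approx} almost verbatim, the single new point being that the regularising term $\varepsilon\,\de\tilde{X}$'s extra noise $\varepsilon\,\de\tilde B^n_s$ — which has constant (hence bounded and Lipschitz) coefficient and is driven by Brownian motions $\tilde B^1,\dots,\tilde B^N$ independent of everything else — disturbs none of the structural ingredients used there. First I would observe that $\bar v_{\varepsilon,N}$ is exactly the value function of the regularised cooperative $N$-player game with \emph{deterministic} initial state $\bar x=(x_1,\dots,x_N)$ in place of the random vector $\tilde\xi$, that the coefficients $b^n_N,\sigma^n_N,f^n_N,g^n_N$ inherit \Cref{ass:ex_uniq_SDE,ass:reward} directly from $b,\sigma,f,g$, and that adding $\varepsilon\,\de\tilde B^n_s$ preserves the existence and uniqueness of a solution to the coupled system together with the a priori bounds of the type in \Cref{prop:solmkvsde}. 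Moreover the law-invariance property of \Cref{thm:LIP} applies with the enlarged filtration $\tilde{\mathbb{F}}^{W,B}$ (exactly as it was used to define $v_\varepsilon$), so that $v_{\varepsilon,N}$ is a well-defined function of $\tilde\mu$. Combining the empirical-measure bound $\mathcal{W}_2^2\big(\tfrac1N\sum_n\delta_{X^n},\tfrac1N\sum_n\delta_{Y^n}\big)\le\tfrac1N\sum_n\tilde{\E}\lvert X^n-Y^n\rvert^2$ with the computations of \Cref{lemma:propertiesliftedvf} then yields that $v_{\varepsilon,N}$ and $\bar v_{\varepsilon,N}$ satisfy the counterpart of \Cref{cor:propertiesvf}: both are bounded, jointly continuous, and Lipschitz in the spatial argument.

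Next I would reduce the claim, exactly as in \Cref{thm:finite_approx}, to the identity $v_{\varepsilon,N}(t,\tilde\mu)=\tilde{\E}[\bar v_{\varepsilon,N}(t,\tilde\xi)]$ for every $(t,\tilde\xi)$ with $\Lcal(\tilde\xi)=\tilde\mu$, from which the stated formula follows on taking $\tilde\mu=\mu^{\otimes N}$ and a product-structured $\tilde\xi$. I would first prove the two inequalities for simple $\tilde\xi=\sum_{k=1}^K\bar x_k\mathbf{1}_{\tilde E_k}$ with $\{\tilde E_k\}_{k=1}^K\subset\tilde{\mathcal G}$ a partition of $\tilde\Omega$. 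For the bound $v_{\varepsilon,N}(t,\tilde\mu)\le\tilde{\E}[\bar v_{\varepsilon,N}(t,\tilde\xi)]$, pick $\delta$-optimal controls $\tilde\alpha^{(k)}\in\tilde{\mathcal A}^N$ for the deterministic problem started at $\bar x_k$, glue them into $\tilde\alpha:=\sum_k\tilde\alpha^{(k)}\mathbf{1}_{\tilde E_k}$, which is admissible since $\tilde E_k\in\tilde{\mathcal G}\subseteq\tilde{\F}_t$; because the coupling in the $N$-player system is through a deterministic function of the current state–control configuration (there is no self-consistency constraint), pathwise uniqueness shows that the solution driven by $(\tilde\xi,\tilde\alpha)$ coincides on $\tilde E_k$ with the one started at $\bar x_k$ under $\tilde\alpha^{(k)}$, so its cost equals $\sum_k\tilde{\PP}(\tilde E_k)$ times the cost of $\tilde\alpha^{(k)}$, which is at most $\tilde{\E}[\bar v_{\varepsilon,N}(t,\tilde\xi)]+\delta$; letting $\delta\downarrow0$ gives the inequality. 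For the reverse bound, take a $\delta$-optimal $\tilde\alpha$ for $\tilde\xi$ and disintegrate over the events $\{\tilde\xi=\bar x_k\}$: using that $\tilde{\mathbb{F}}$ is rich enough (independence of $\tilde{\mathcal G}$ and $\tilde{\F}^{W,B}_{\infty}$), the conditional dynamics on $\tilde E_k$ realise an admissible control of the deterministic problem at $\bar x_k$, so the conditional cost is bounded below by $\bar v_{\varepsilon,N}(t,\bar x_k)$; averaging and taking the infimum over $\tilde\alpha$ concludes. This is precisely the argument carried out in \cite[Theorem A.7, equality (A.10)]{cosso2023master}.

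Finally I would pass from simple to general $\tilde\xi\in L^2(\tilde\Omega,\tilde{\F}_t,\tilde{\PP};\R^{Nd})$ by choosing simple $\tilde\xi_m\to\tilde\xi$ in $L^2$: the left-hand side converges because $\mathcal{W}_2(\Lcal(\tilde\xi_m),\Lcal(\tilde\xi))\le\tilde{\E}[\lvert\tilde\xi_m-\tilde\xi\rvert^2]^{1/2}\to0$ and $v_{\varepsilon,N}$ is $\mathcal{W}_2$-continuous, while the right-hand side converges by the Lipschitz continuity (or boundedness together with a.s. convergence along a subsequence) of $\bar v_{\varepsilon,N}$ and dominated convergence. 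Taking $\tilde\mu=\mu^{\otimes N}$ with $\tilde\xi$ of product form then gives $v_{\varepsilon,N}(t,\mu^{\otimes N})=\int_{\R^{Nd}}\bar v_{\varepsilon,N}(t,x_1,\dots,x_N)\,\mu(\de x_1)\cdots\mu(\de x_N)$, as required. The one genuinely delicate point is the reverse inequality for simple $\tilde\xi$ — the measurable-selection/conditioning step that turns a single control adapted to the enlarged filtration into a family of controls for the deterministic-initial-state subproblems — but this is handled exactly as in \Cref{thm:LIP} and in \cite{cosso2023master}, and the presence of $\varepsilon\,\de\tilde B^n_s$ changes nothing, since $\tilde B^1,\dots,\tilde B^N$ belong to the same Brownian data that is independent of $\tilde{\mathcal G}$.
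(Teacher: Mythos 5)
Your proposal is correct and follows essentially the same route as the paper, which proves this result simply by noting it is the analogue of \Cref{thm:finite_approx}: reduce to the identity $v_{\varepsilon,N}(t,\tilde\mu)=\tilde{\E}[\bar v_{\varepsilon,N}(t,\tilde\xi)]$, establish the double inequality first for simple $\tilde\xi$ and then by approximation using the boundedness, continuity and Lipschitz properties inherited from \Cref{cor:propertiesvf}, citing \cite[Theorem A.7]{cosso2023master}. Your additional observation that the constant-coefficient noise $\varepsilon\,\de\tilde B^n_s$, being independent of $\tilde{\mathcal G}$, disturbs none of these ingredients is exactly the point the paper leaves implicit.
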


\subsubsection{Approximation result}
 By \Cref{lemma:limit-v_eps_n}, \ref{lemma:limit-v_eps}, and \Cref{thm:int-repr-v_eps_n}, we can finally state the main approximation result for diffusions that are possibly degenerate.
\begin{thm}\label{thm:finite_approx_eps}
    Under \Cref{ass:ex_uniq_SDE,ass:reward}, and assuming $A$ is compact, it holds that, for every $(t,\mu)\in[0,T]\times\mathcal{P}_{q}(\R^d)$,  $q > 2$,
    \begin{align*}
        v(t,\mu) =  \lim_{\varepsilon \to 0} \lim_{N \to \infty} {}\int_{\R^{Nd}}\bar{v}_{\varepsilon, N}(t,x_1,\ldots,x_N)\ \mu(\de x_1) \ldots \mu(\de x_N).
    \end{align*}
\end{thm}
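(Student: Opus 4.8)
The plan is to derive the statement directly by composing the three preceding results in the order dictated by the iterated limit $\lim_{\varepsilon\to0}\lim_{N\to\infty}$. Fix $(t,\mu)\in[0,T]\times\mathcal{P}_q(\R^d)$ with $q>2$, and fix $\varepsilon>0$. First I would invoke \Cref{thm:int-repr-v_eps_n}: since the coefficients $b^n_N,\sigma^n_N,f^n_N,g^n_N$ of the regularised $N$-player game are built directly from $b,\sigma,f,g$ and hence inherit \Cref{ass:ex_uniq_SDE,ass:reward}, for every $N\in\mathbb{N}_+$ one has the exact identity
\[
\int_{\R^{Nd}}\bar{v}_{\varepsilon,N}(t,x_1,\ldots,x_N)\ \mu(\de x_1)\ldots\mu(\de x_N) = v_{\varepsilon,N}(t,\mu^{\otimes N}).
\]

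Next I would pass to the limit $N\to\infty$ on the right-hand side. Here the hypothesis $\mu\in\mathcal{P}_q(\R^d)$ with $q>2$ is used, so that $\mu^{\otimes N}$ meets the uniform $q$-moment requirement of the propagation-of-chaos argument; \Cref{lemma:limit-v_eps_n} then gives
\[
\lim_{N\to\infty}\int_{\R^{Nd}}\bar{v}_{\varepsilon,N}(t,x_1,\ldots,x_N)\ \mu(\de x_1)\ldots\mu(\de x_N) = v_{\varepsilon}(t,\mu).
\]
Finally I would send $\varepsilon\to0$: by \Cref{lemma:limit-v_eps} we have $\lvert v_\varepsilon(t,\mu)-v_0(t,\mu)\rvert\leq\hat{C}\varepsilon$, and since $\hat{B}$ is independent of $\hat{W}$ the identity $v_0=v$ holds on $[0,T]\times\mathcal{P}_2(\R^d)$, so $v_\varepsilon(t,\mu)\to v(t,\mu)$. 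Chaining the two displays yields the claim; in fact \Cref{lemma:limit-v_eps} even furnishes the explicit rate $\hat{C}\varepsilon$ for the outer limit.

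There is no genuine obstacle left at this stage: all the analytic substance — the propagation of chaos for the cooperative game (\Cref{lemma:limit-v_eps_n}, via \Cref{thm:prop_chaos}) and the vanishing-regularisation estimate (\Cref{lemma:limit-v_eps})\,— has already been established, while the finite-dimensional representation (\Cref{thm:int-repr-v_eps_n}) is an exact equality, not an approximation. The only points demanding a little care are bookkeeping ones: the moment hypotheses must be made compatible across the three results (\Cref{thm:int-repr-v_eps_n} and \Cref{lemma:limit-v_eps} hold on $\mathcal{P}_2(\R^d)$, whereas \Cref{lemma:limit-v_eps_n} needs the strictly stronger $\mathcal{P}_q(\R^d)$, $q>2$, so one works with $\mu\in\mathcal{P}_q(\R^d)$ throughout as in the statement), and the order of limits must be respected — the result is the iterated limit $\lim_{\varepsilon\to0}\lim_{N\to\infty}$, which is exactly the order in which the two lemmas are applied, so no uniformity in $\varepsilon$ of the $N\to\infty$ convergence is required.
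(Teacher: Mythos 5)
Your proposal is correct and follows exactly the same route as the paper, which proves the theorem by combining \Cref{thm:int-repr-v_eps_n}, \Cref{lemma:limit-v_eps_n} and \Cref{lemma:limit-v_eps} in precisely the order you describe. Your write-up merely spells out the chaining and the moment-hypothesis bookkeeping that the paper leaves implicit.
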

\begin{proof}
    The proof follows from \Cref{lemma:limit-v_eps}, \Cref{lemma:limit-v_eps_n} and \Cref{thm:int-repr-v_eps_n}.
\end{proof}

\begin{remark}
    Similarly to \Cref{rem:convergencecontrolrules}, we stress here that $\bar{v}_{\varepsilon,N}$ is approximated by learning $\delta^\varepsilon_n$-optimal controls of the large population cooperative control problem, for $\delta^\varepsilon_n\searrow0$, converging in the sense of measure-valued rules towards an optimal control of the starting mean field control problem. This convergence is only provided for every fixed $\varepsilon>0$, and is not uniform in $\varepsilon$.
\end{remark}
 The result of \Cref{thm:finite_approx} (resp. \Cref{thm:finite_approx_eps}) implies that for a sufficiently large $N$ (and sufficiently small $\varepsilon$), a simple integration of the finite dimensional value function $\bar v_N$ (resp. $\bar v_{\varepsilon,N}$) can be used as an approximation of the value function of the extended mean-field optimal control problem. 
We provide an implementable algorithm to obtain this approximation in the next section.

\begin{remark}
    Under stronger assumptions, the value function $v(t,\mu)$ is a viscosity solution of the Master Bellman equation:
    \begin{align*}
    \begin{cases}
        \partial_t v(t,\mu) = F\big(t,\mu,v(t,\mu),\partial_{\mu}v(t,\mu)(\cdot),\partial_x\partial_{\mu}v(t,\mu)(\cdot)\big), &(t,\mu)\in[0,T)\times\mathcal{P}_2(\R^d),\\
        \displaystyle v(T,\mu) = \int_{\R^d}g(x,\mu)\ \mu(\de x), &\mu\in\mathcal{P}_2(\R^d),
    \end{cases}
    \end{align*}
    where $F$ is the associated maximised Hamiltonian for the functions $f,b,\sigma$, see e.g. \cite[Equation (3.3)]{cosso2023master} or \cite[Equation (5.1)]{cosso2023optimal}.
    In this case, the numerical method of \Cref{sec:numerics} can be employed to approximate the possibly unique solution of the Master Bellman equation.
\end{remark}

\section{A computational scheme with neural networks}\label{sec:numerics}

We detail here the precise steps for the numerical computation of the mean field value function \eqref{intrinsic_val}. As outlined in the introduction, we avoid learning directly the value function in the Wasserstein space, but instead we first learn an approximating finite-dimensional value function $\bar v_N$ (or $\bar v_{\varepsilon,N}$) before evaluating a corresponding integral for the input measure.  We point out that $\bar v_N$ (resp. $\bar v_{\varepsilon,N}$) is the value function associated to an classical optimal control problem in dimension $Nd$ and is the unique viscosity solution to a Hamilton-Jacobi-Bellman (HJB) equation under our assumptions.\\

For simplicity, we fix a time horizon of $[0,T]$ and large $N\in \N_+$ (the choice of $N$ depends on the specific problem, see \Cref{sec:experiments}).  Due to the potentially high dimensionality of the problem, classical numerical methods for HJB equations, such as finite differences, finite elements and semi-Lagrangian schemes,  suffer from the well known ``curse of dimensionality''.  For this reason, the algorithm we propose to numerically compute $\bar v_N$ (resp. $\bar v_{\varepsilon,N}$) can make use of neural networks so that it is implementable in high dimension. As we already mentioned in the introduction, the choice of the numerical method used to approximate the finite dimensional value function  is not restrictive and the one proposed below (Sections \ref{sect:policy} and \ref{sect:value}) is just a possible one having the advantage of working in the most general setting.
\\

For the purposes of numerical computation, we shall assume that the control space $A$ is a subset of some Euclidean space $\R^{d_A}$. For our experiments, we will parametrise both the control and value functions with neural networks. We adopt a similar network structure to that used in \cite{lefebvre2023differential}. Firstly, the output of two sub-networks, taking the time $t$ and the particles $(x_1,\ldots, x_N)$ as inputs respectively, are concatenated. Then, this concatenated output is passed as an input to one final sub-network. Each sub-network is of feedforward form, comprising of one hidden layer. 
 By a feedforward network we mean a map $\Ncal\Ncal: \R^{w_0} \to \R^{w_l}$, specified by the number of layers $l$, the widths $\bm{w} = (w_0, \ldots, w_l)$ of each layer and an activation function $\act: \R \to \R$. For an input $x \in \R^{w_0}$ we have
    \begin{align*}
        \Ncal\Ncal(x) = \sigma_{\rm out}(\mathrm{W}_l(\act(\mathrm{W}_{l-1}(\ldots \act(\mathrm{W}_1 x + b_1)) + b_{l-1} )) + b_l)
    \end{align*}
    where $\mathrm{W}_j\in\R^{w_{j}\times w_{j-1}}, b_j\in\R^{w_{j}}$ for $1\leq j \leq l$ are known as the weights and biases of the network, and the activation function $\act$ is applied component-wise to vectors. The function $\sigma_{\rm out}: \R \to \R$, applied component-wise on the last layer, serves to modify the range of the neural network. In contrast to \cite{lefebvre2023differential}, the width of the hidden layer to grows linearly with respect to the dimension of the players; in our experiments this is taken to be $10+ Nd$. This is done to allow for a sufficient number of parameters in the network to avoid underfitting.

\subsection{Obtaining an approximately optimal policy}\label{sect:policy}

The first step is to obtain an approximately optimal policy for the finite $N$-player game through a policy gradient iteration. To do so, we first approximate the system of $N$-players $(\bar{X}^1, \ldots, \bar{X}^N)$ in \eqref{eq:nplayer_dynamics} with the Euler-Maruyama scheme as follows. We discretise the time horizon $[0,T]$ into equal subintervals of length $\Delta t>0$, and let $\Delta W^n_s \coloneqq W^n_{s+\Delta t} - W^n_s$ denote the Brownian increments. The approximating time-discretised system is denoted by $(\hat{X}^1, \ldots, \hat{X}^N)$, which for each $n \in \{1, \ldots, N\}$ and $s\in \{0, \Delta t, 2 \Delta t,\ldots, T-\Delta t\}$, satisfies the dynamics
\begin{align}\label{traj_EM}
    \hat{X}^n_{s+\Delta t} = \hat{X}^n_s + b^n_N(s, \hat{X}_s, \alpha_s) \Delta t + \sigma^n_N(s, \hat{X}_s, \alpha_s) \Delta W^n_s,
\end{align}
where the functions $b^n_N$, $\sigma^n_N$, $f^n_N$ and $g^n_N$ are as given in \Cref{sec:finitedimapprox}. If $\sigma$ is possibly degenerate,  we simulate additional Brownian increments $\Delta B^n_t = B^n_{t+\Delta t}-B^n_t$, independent of $\Delta W_t$, and for some small $\varepsilon > 0$, consider instead
\begin{align*}
    \hat{X}^n_{s+\Delta t} = \hat{X}^n_s + b^n_N(s, \hat{X}_s, \alpha_s) \Delta t + \sigma^n_N(s, \hat{X}_s, \alpha_s) \Delta W^n_s + \varepsilon \Delta B^n_s.
\end{align*}
We now parameterise the control by a neural network with parameters $\theta$, and denote this control by $\alpha_{\theta}$. Choose a measure $\mu_0 \in \Pcal_2(\R^d)$ to act as a sampling measure for $\hat{X}_0$ (see \Cref{rem:sampling_measure} on the choice of such a sampling measure), and consider the (discretised) cost functional
\begin{align*}
     J_{N}(\theta) = \frac{1}{N} \sum^N_{n=1}\ \E_{\hat{X}_0 \sim \mu_0} \left[ \sum^{T-\Delta t}_{s=0} f^n_N(s, \hat{X}_s, \alpha_s) \Delta t + g^n_N(\hat{X}_T) \right].
\end{align*}
The gradient of the cost functional with respect to the parameters $\theta$ is given by
\begin{align*}
    \nabla_{\theta} J_N(\theta) = \frac{1}{N} \sum^N_{n=1}\ \E_{\hat{X}_0 \sim \mu_0} \left[ \sum^{T-\Delta t}_{s=0} \nabla_{\theta} f^n_N(s, \hat{X}_s, \alpha_{\theta}(s, \hat{X}_s)) \Delta t + \nabla_{\theta} g^n_N(\hat{X}_T) \right].
\end{align*}
We then estimate $\nabla_{\theta} J_N(\theta)$ with Monte Carlo simulation. Let $M>0$ denote the total number of sample trajectories of the system $\hat{X}$, given by \eqref{traj_EM}. For each $1 \leq m \leq M$ and $1 \leq n \leq N$, let $\hat{X}^{n,m}_s$ denote the $m$\textsuperscript{th} sample of the $n$\textsuperscript{th} player at time $s$. The Monte Carlo estimator of $\nabla_{\theta} J_N(\theta)$ is then given by
\begin{align*}
    G_{\theta} \coloneqq\frac{1}{MN} \sum^M_{m=1} \sum^N_{n=1}  \sum^{T - \Delta t}_{s=0} \nabla_{\theta} f^n_N(s, \hat{X}^{\cdot,m}_s, \alpha_{\theta}(s, \hat{X}^{\cdot,m}_s))\Delta t + \nabla_{\theta} g^n_N(\hat{X}^{\cdot,m}_T).
\end{align*}
To apply the policy iteration, first initialise the parameters of the neural network by some $\theta_0$, as well as a learning rate $\gamma > 0 $. Then the successive iterations for the parameters are given by the update rule
\begin{align*}
    \theta_{j+1} \coloneqq \theta_j - \gamma G_{\theta_j},\quad j \geq 0.
\end{align*}
At each iteration $j$, the cost functional $J_N(\theta_j)$ is evaluated. The iteration procedure terminates when no further minimisation in $J_{N}$ occurs after a prespecified number of iterations. We take the neural network after termination as an approximately optimal control, and denote this by $\alpha^{*}$. We note that although each $\hat{X}^{n,m}_t$ depend implicitly on the parameters $\theta$, the terms $\nabla_{\theta} g^n_N(\hat{X}^{\cdot,m}_T)$ and $\nabla_{\theta} f^n_N(s, \hat{X}^{\cdot,m}_s, \alpha_{\theta}(s, \hat{X}^{\cdot,m}_s))$ can be computed efficiently in practice with automatic differentiation tools.

\begin{remark}\label{rem:sampling_measure}
    The sampling measure $\mu_0$ should have support on the domain of $X_0$ and should be chosen such that there is a sufficient sampling of the domain. Alternatively, one can also truncate the domain and restrict the initial conditions within some compact set $\mathcal{K} \subset \R^d$, sampling $\mathcal{K}$ according to the uniform distribution.
\end{remark}

\subsection{Computing the finite-dimensional value function}\label{sect:value}

For the rest of this section, we will assume that the diffusion coefficient $\sigma$ is non-degenerate and apply the results from \Cref{subsec:non-deg}. For the case that $\sigma$ is possibly degenerate, it is sufficient to refer to \Cref{sigma_degen}, and substitute the value function $\bar{v}_N$ with $\bar{v}_{\varepsilon,N}$ (for $\varepsilon$ small).\\

Recall from \eqref{vn_eq} and \eqref{vbarn:eq} that $\bar{v}_N$ is the value function for the $N$-player game, and that for any $t\in [0,T]$, $(x_1,\ldots, x_N)\in (\R^{d})^N$, we have 
\begin{align*}
     \bar{v}_{N}(t, x_1, \ldots, x_N) =\inf_{\bar{\alpha}\in\bar{\A}^N}\frac{1}{N}\sum_{n=1}^{N}\bar{\E}\Bigg[ \int_t^T f^n_N(s,\bar{X}_s,\bar{\alpha}_s)\ \de s + g^n_N(\bar{X}_T) \Bigg].
\end{align*}
By using the discretised dynamics $\hat{X}$ and approximately optimal control $\alpha^{*}$ as obtained in the previous section, given an initial condition $\hat{X}^{\cdot, m}_t = (x_1, \ldots, x_N)$, an estiamte of $\bar{v}_{N}(t, x_1, \ldots, x_N)$ is given by
\begin{align}\label{eq_v_n_pointwise}
    \bar{v}^{MC}_{N}(t, x_1, \ldots, x_N) \coloneqq \frac{1}{MN} \sum^M_{m=1} \sum^N_{n=1}  \sum^{T - \Delta t}_{s=t}  f^n_N(s, \hat{X}^{\cdot,m}_s, \alpha^{*}(s, \hat{X}^{\cdot,m}_s)) \Delta_t + g^n_N(\hat{X}^{\cdot,m}_T).
\end{align}
Then, in virtue of \Cref{thm:finite_approx}, given $\mu \in \Pcal_2(\R^d)$, an approximation of the (mean-field) value function $v(t, \mu)$ can be obtained by evaluating the integral
\begin{align}\label{vn mc estimate_integrand}
    \int_{\R^{Nd}}\bar{v}^{MC}_{N}(t, x_1, \ldots, x_N) \ \mu(\de x_1)\ldots \mu(\de x_N).
\end{align}
This provides an approximation for $v(t,\mu)$ for each fixed $(t,\mu)\in [0,T]\times \Pcal_2(\R^d)$. To obtain an approximation of $v$ across the whole domain $[0,T] \times \Pcal_2(\R^d)$, we employ an additional regression step to approximate $\bar{v}_N$, using samples of \eqref{eq_v_n_pointwise} over the finite-dimensional domain $[0,T] \times \R^{Nd}$. Since we do not need to approximate the distribution of $\hat{X}$ in \eqref{eq_v_n_pointwise}, the number of Monte Carlo samples $M$ can be chosen flexibly to accommodate for individual computational resources. This differs from the approach of \cite{pham2022mean, pham2023mean}, where the regression is performed directly for the mean-field value function $v$ on the infinite-dimensional domain $[0,T] \times \Pcal_2(\R^d)$, and a high number of Monte Carlo samples per iteration is required to approximate the distribution of $X$ in \eqref{SDE}.\\

In the case of regression, consider a parametrised function  $\bar{v}_{N,\eta}: [0,T] \times \R^{Nd} \to \R$ with parameters $\eta$. Let $K>0$ denote the number of samples for regression, and let $(t^k, \mathbf{x}^k) \in [0,T] \times \R^{Nd}$, $ 1 \leq k \leq K$. The samples are denoted by $y_1, \ldots, y_K \in \R$, which correspond to the values obtained from the expression \eqref{eq_v_n_pointwise} with the initial conditions $(t^1, \mathbf{x}^1), \ldots, (t^K, \mathbf{x}^K)$ respectively. Consider a mean-squared error (MSE) loss function 
\begin{align*}
    L(\eta) \coloneqq \frac{1}{K} \sum^{K}_{k=1} \lvert \bar{v}_{N,\eta}(t^k, \mathbf{x}^k) - y_k  \rvert^2.
\end{align*}
Let $\pr{\gamma} > 0$ be the regression learning rate, and let $\eta_j$ be the function  parameters at the $j$\textsuperscript{th} iteration, which are updated according to $\eta_{j+1} \coloneqq \eta_j  - \pr{\gamma} \nabla_{\eta}L(\eta)$, for $j \geq 0$. As in the case of the policy iteration step, the iteration terminates when $L(\eta)$ does not decrease after a prespecified number of iterations. The parameters after termination are taking to be approximately optimal and are denoted by $\eta^{*}$. For simplicity, we denote the resulting function $\bar{v}_{N,\eta^*}$ by  $\bar{v}_{N}^{*}$.\\

In practice, we observe a slow convergence rate during the training of $\bar{v}_{N,\eta}$ when minimising for $L(\eta)$. To accelerate the training process, we employ the differential regression method as outlined in \cite{lefebvre2023differential}, which learns in addition the gradient of the value function during training. In particular, each $y_k$ can be considered as a function of the initial condition $(t^k,\mathbf{x}^k)$, so that the derivatives $\partial_t y_k$  and $\nabla_{\mathbf{x}}y_k$ are unbiased estimators of {$\partial_t \bar{v}^{MC}_N(t, x_1, \ldots, x_N)$ and $\nabla_{\mathbf{x}} \bar{v}^{MC}_N(t, x_1, \ldots, x_N)$} respectively. The gradient $\nabla_{\mathbf{x}}y_k$ can in turn be evaluated by computing the partial derivatives of $y_k$ with respect to each simulated trajectory for each player across each $s \in \{0, \Delta t,\ldots, T - \Delta t\}$ via automatic differentiation. Then, $\bar{v}_{N,\eta}$ is trained on the weighted loss function
\begin{align*}
    L_w(\eta) \coloneqq  \frac{1}{K} \sum^{K}_{k=1} &\bigg( w_{\mathrm{val}} \lvert \bar{v}_{N,\eta}(t^k, \mathbf{x}^k) - y_k \rvert^2\\
    &\ + w_{\mathrm{der}} \Big[\lVert \nabla_{\mathbf{x}} \bar{v}_{N,\eta}(t^k, \mathbf{x}^k) - \nabla_{\mathbf{x}} y_k \rVert^2_2
    + \lvert \partial_t \bar{v}_{N,\eta}(t^k, \mathbf{x}^k) - \partial_t  y_k \rvert^2  \Big] \bigg),
\end{align*}
for some constants $w_{\mathrm{val}}, w_{\mathrm{der}} > 0$. We find that for our experiments, optimising over $L_w$ was more effective with shorter experiment times, compared to the approach in \cite{lefebvre2023differential}  of alternating between two separate loss functions of the value function and its derivatives.

\begin{remark}
    When the diffusion coefficient $\sigma^n_N$ is independent of the control $\alpha$, it is also possible to learn an optimal control via a backwards stochastic differential equation (BSDE) characterisation of the value function. In this case, the learning procedure coincides with the one of a BSDE solvers such as in \cite{han2017deep, hpw2021}. 
\end{remark}

\begin{remark}
    The Lipschitz assumptions  \Cref{ass:ex_uniq_SDE,ass:reward} on the coefficients of $b$, $\sigma$, $f$, $g$ are sufficient conditions for the obtaining an unbiased estimator of $\nabla_{\mathbf{x}} \bar{v}_{N,\eta}(t^k, \mathbf{x}^k)$. We refer to the textbook \cite[Ch. 7]{glasserman2004monte} and the references within for a more detailed treatment of pathwise derivative estimators.
\end{remark}

\subsection{Recovering the mean field value function}

%Recall that, from \Cref{thm:n_approx}, the mean field value function is given by 
% \begin{align*}
%     v(t,\mu) = \lim_{N\to\infty} \int_{\R^{Nd}}\bar{v}_N(t,x_1,\ldots,x_N)\ \mu(\de x_1) \ldots \mu(\de x_N).
% \end{align*}

Given, for $N\in \N_+$, the approximation $\bar{v}^{*}_{N}$ of the finite-dimensional value function $\bar{v}_{N}$ and recall that the mean field value function is given by the limit in \eqref{eq:main_res}, we obtain its  approximation by evaluating, for $N$ big enough, the integral in \eqref{eq:main_res}. In other words we have, for $N$ sufficiently large,
\begin{align*}
    v(t,\mu) \approx I_N \coloneqq \int_{\R^{Nd}} \bar{v}^{*}_{N}(t,x_1,\ldots,x_N)\ \mu(\de x_1) \ldots \mu(\de x_N).
\end{align*}
\begin{algorithm}[!t]
\label{algo}
        \caption{Finite-dimensional approximation for the MFC problem}
        \begin{algorithmic}[1]
        \STATE{Input: Dynamics $b^n_N$, $\sigma^n_N$; costs $f^n_N$, $g^n_N$; stepsize $\Delta t$; max patience $P_{\rm max}$}        
        \STATE{Input: Control $\alpha_{\theta}$; learning rate $\gamma$}
        \STATE{$v_{\rm best} \leftarrow +\infty$, $patience \leftarrow 0 $}
        \WHILE{$patience < P_{\rm max}$}
        \STATE{Initialise $v = 0$, sample $x = (x_1, \ldots, x_N)$ uniformly on the domain}
        \FOR{$t = 0, \Delta t, \ldots, T-\Delta t$}
        \STATE\label{line_gen_av}{$a \leftarrow \alpha_{\theta}(t, x)$, $v \leftarrow v + \sum^{N}_{n=1} (f^n_N(t, x, a)/N) \Delta t$}
        \STATE\label{line_gen_bm}{Generate Brownian increments $\Delta W^n_t$, $n \in \{1, \ldots,N\}$}
        %\FOR{$ n = 1, \ldots ,N$}\label{line_for_n}
        \STATE\label{line_gen_x}{$x_n \leftarrow x_n + b^n_N(t, x, a) \Delta t + \sigma^n_N(t, x, a) \Delta W^n_t, \ n \in \{1, \ldots ,N\}$ }
        %\ENDFOR
        \ENDFOR
        \STATE\label{line_gen_new_v}{$v \leftarrow v + \sum^{N}_{n=1} g^n_N(x)/N$, $\theta \leftarrow \theta - \gamma \nabla_{\theta} v$}
        \IF{$ v < v_{\rm best}$}
        \STATE{$v_{\rm best} \leftarrow v,\ patience \leftarrow 0$}
        \ELSE
        \STATE{$patience \leftarrow patience +1$}
        \ENDIF
        \ENDWHILE
        \RETURN{$\alpha_{\theta^{*}}$}
        \STATE{Input: Learning rate $\pr{\gamma}$; value function $\bar{v}_{N,\eta}$; measure $\mu$}%; weights $w_{\mathrm{val}}$, $w_{\mathrm{der}}$;}
        \STATE{$L_{\rm best} \leftarrow +\infty$, $patience \leftarrow 0$}
        \WHILE{$patience < P_{\rm max}$}
        \STATE{Initialise loss $L=0$, sample $x = (x_1, \ldots, x_N)$ uniformly on the domain}
        \STATE{Repeat \Cref{line_gen_bm,line_gen_av,line_gen_x,line_gen_new_v} to compute $v$, but replace $\alpha_{\theta}$ with $\alpha_{\theta^*}$.}
        % \FOR{$t = 0, \Delta t, \ldots, T- \Delta t$}
        % \STATE{Generate Brownian increments $\Delta W^n_t$, $n \in \{1, \ldots,N\}$}
        % \STATE{$a^* \leftarrow \alpha^{*}(t, x)$, $v \leftarrow v + \sum^{N}_{n=1} (f^n_N(t, x, a^*)/N) \Delta t$}
        % \FOR{$ n = 1, \ldots ,N$}
        % \STATE{$x_n \leftarrow x_n + b^n_N(t, x, a^*) \Delta t + \sigma^n_N(t, x, a^*) \Delta W^n_t$}
        % \ENDFOR
        % \ENDFOR
        % \STATE{$v \leftarrow v + \sum^{N}_{n=1} g^n_N(x)/N$}
        \STATE{$L \leftarrow L +  \lvert \bar{v}_{N,\eta}(0,x_1, \ldots, x_N) - v \rvert^2 $, $\eta \leftarrow \eta - \pr{\gamma} \nabla_{\eta}L $}
        %\marco{change $L$ in $L_w$}
          \IF{$ L < L_{\rm best}$}
        \STATE{$L_{\rm best} \leftarrow L,\ patience \leftarrow 0$}
        \ELSE
        \STATE{$patience \leftarrow patience +1$}
        \ENDIF
        \ENDWHILE
        \RETURN{$\int_{x_1,\ldots, x_N} \bar{v}^*_{N}(0,x_1,\ldots, x_N)\ \mu(x_1) \ldots \mu(x_N)$}
        \end{algorithmic}
        \end{algorithm}
In the case where the process $X$ is real-valued, the integral $I_N$ can be evaluated by using the inverse transform sampling. Given $\mu \in \Pcal(\R)$, let ${F_{\mu}:\R \to [0,1]}$ be the cumulative distribution function associated to $\mu$. Then one obtains
\begin{align*}
    I_N = \E \left[ \bar{v}^{*}_{N}(t, F_{\mu}^{-1}(U_1),\ldots,F_{\mu}^{-1}(U_N)) \right],
\end{align*}
where $U_1,\ldots, U_N$ are independent uniformly distributed random variables over $[0,1]$, so that evaluating $I_N$ is reduced to sampling over the unit hypercube. This allows for the required number of samples to be reduced by the use of quasi-Monte Carlo methods, for example employing the use of Sobol sequences. We summarise the overall algorithm in this section in pseudo-code in \Cref{algo} for the MSE loss function $L$. Observe that the first ``\text{while}'' loop in \Cref{algo} learns the optimal control, while the second loop learns the finite-dimensional value function.

\section{Numerical experiments}\label{sec:experiments}
In this section we consider three experiments for our proposed numerical algorithm. The rate of convergence of the finite dimensional value functions towards the mean field value function is only proven for a few specific cases \cite{germain2022rate,carmona2022convergence,cardaliaguet_regularity_2023,cecchin2024quantitative}, and to our best knowledge, the general case remains an open problem. Our numerical tests suggests that an upper bound of $N=500$ players can approximate the mean field value function up to a 0.1\% degree of accuracy. In the case of a weaker mean field coupling, it maybe be possible to consider a lower dimension for a similar level of accuracy, such as the case in Example 3, where we perform the regression step with 100 players. This reduces the complexity of the problem compared to the sampling and regression of the value function in the Wasserstein space.\\

In all experiments, the training is performed on the Google Colab platform, using the NVIDIA T4 GPU. We choose our examples such that explicit solutions are available for benchmarking purposes. In these cases, we evaluate the performance of the method by computing a residual loss for the mean field control problem. Let $P>0$ be the number of samples in a test set of quantized measures, at which that value functions will be evaluated, and let $L>0$ be the number of sample points on the underlying domain $\R^{Nd}$. The quantized measures can be generated as follows. Sample a number of points $x_1,\ldots, x_L \in \R^{Nd}$, and sample the exponential random variables $E_1,\ldots, E_L \sim \Exp(1)$, set $\pi_l = E_l/(\sum_j E_j)$. For $1 \leq p \leq P$, define the quantized measures
$\mu^p = \sum^L_{l=1} \pi_l \delta_{x_l}$. Then, we compute the residual loss function
\begin{align*}
    \mathcal{L}_{res} \coloneqq \frac{1}{P} \sum^P_{p=1} \left\lvert v(t,\mu^p) - \int_{\R^{Nd}} \bar{v}^{*}_{N}(t,x_1,\ldots, x_N)\ \mu^p(\de x_1) 
              \ldots \mu^p(\de x_N) \right\rvert^2.
\end{align*}
When an analytical expression for $v(t,\mu^p)$ is not available, we can still evaluate the performance of the method by considering the accuracy of the approximation $\bar{v}^{*}_{N}$ for the finite-dimensional value function. In this case, we compute the loss associated to the finite-dimensional Hamilton-Jacobi-Bellman (HJB) equation as follows. Let $\Delta t ,\Delta x \in \R_+$ be given step sizes in the time and spatial directions respectively. Let $\mathcal{T} = \{0, \Delta t, \ldots, T- \Delta t, T\}$, and let $\mathcal{X}$ be a discretised grid with step $\Delta x$ on the domain $\R^d$, on which the computation is performed. For a sufficiently smooth function $\phi: [0,T] \times \R^{Nd} \to \R$, we denote by $\D_t(\phi)$ the approximation of the partial derivative $\partial_t \phi$ obtained by automatic differentiation. Similarly, we denote by $\D_{\mathbf{x}}(\phi)$ and $\D^2_{\mathbf{x}}(\phi)$ the approximation of the gradient vector $D_{\mathbf{x}}(\phi)$ and Hessian matrix $D^2_{\mathbf{x}}(\phi)$ respectively.
%where $\mathbf{x} = (x_1,\ldots, x_N)$ and $x_{-j} = (x_1, \ldots, x_{i-1}, x_{i+1},\ldots, x_N)$. Define next the vector $\D_{\mathbf{x}}(\phi) = (\D_{x_n}(\phi))_{1 \leq n \leq N}$ and the matrix $ \D^2_{\mathbf{x}}(\phi) =  (\D^2_{x_n x_{\pr{n}}}(\phi))_{1 \leq n, \pr{n} \leq N}$. 
Then, consider the HJB loss
\begin{align*}
\mathcal{L}_{\text {HJB}}:= & \frac{1}{|\mathcal{T}||\mathcal{X}|} \sum_{\substack{t \in \mathcal{T}\\ \mathbf{x} \in \mathcal{X} }} \left|\D_t(\bar{v}^{*}_{N})(t,\mathbf{x}) + H \big(t,\mathbf{x}, \D_{\mathbf{x}}(\bar{v}^{*}_{N})(t,\mathbf{x}), \D^2_{\mathbf{x}}(\bar{v}^{*}_{N})(t,\mathbf{x}) \big)\right|^2 \\
\qquad & + \frac{1}{|\mathcal{X}|} \sum_{\mathbf{x} \in \mathcal{X}}\left|\bar{v}^{*}_{N}(T,\mathbf{x})-\frac{g^n_N(\mathbf{x})}{N}\right|^2,
\end{align*}
where $H:[0,T] \times \R^{Nd} \times \R^{Nd} \times\R^{(Nd \times Nd)}  \to \R$ is the Hamiltonian for the associated finite-dimensional PDE, i.e.,
\begin{align*}
    H(t, \mathbf{x}, p, M) \coloneqq& \frac{1}{N} \sum^N_{n= 1} f^n_N (t,\mathbf{x}, \alpha^{*}_{\theta}(t,\mathbf{x})) + \sum^N_{n= 1}\langle   b^n_N(t, \mathbf{x}, \alpha^{*}_{\theta}(t,\mathbf{x})) , p^n \rangle\\
    &+ \frac{1}{2}\sum^N_{n= 1} \sum^N_{\pr{n}=1} \ \tr\left( \sigma^n_N(t, \mathbf{x}, \alpha^{*}_{\theta}(t,\mathbf{x})) \sigma^{\pr{n}}_N(t,\mathbf{x}, \alpha^{*}_{\theta}(t,\mathbf{x}))^{\top} M_{\pr{n},n} \right)
\end{align*}
where $p^n$ contains the entries $(p_{n}, \ldots, p_{n+d-1})$ of the vector $p$, and $M_{\pr{n},n}$ is the submatrix of $M$ containing the rows from $\pr{n}$ to $\pr{n} + d - 1$ and the columns from $n$ to $n + d -1$ inclusive. Finally, for illustration purposes, we will also plot the values of $\bar{v}^*_{N}$ against Monte Carlo estimates, generated by the learned control $\alpha^{*}$, along cross-sections or curves in the computed domain.

\subsection{Example 1}
 Suppose $W = (W_t)_{t\in[0,T]}$ is a real-valued Brownian motion on a probability space $(\Omega, \F, \PP)$. Let $X^{ 0 , \xi,\alpha} = (X^{ 0 ,\xi,\alpha}_t)_{t \in [0,T]}$ be a real-valued controlled stochastic process, solution to the SDE
\begin{align*}
    \de X_t = \alpha_t\, \de t +  \de W_t,\quad X_0 = \xi\sim \mu,
\end{align*}
where $\alpha$ is a real-valued progressively measurable process valued in $A\subseteq \R$. 
Let $m_0\in\R$, $\sigma_0\in\R_+$ and consider the following value function
\begin{align}\label{eq:value_test_3}
    v(0,\mu):= \inf_{\alpha\in\mathcal{A}}\bigg\{\Big(\E[X^{ 0 ,\xi,\alpha}_T]-m_0\Big)^2 + \Big(\mathrm{Var}(X^{ 0 ,\xi,\alpha}_T)-\sigma_0^2\Big)^2\bigg\}.
\end{align}
\begin{remark}
Notice that $v(0,\mu)\geq 0$ and a sufficient condition for the existence of an optimal control is that there exists a control $\alpha\in\mathcal{A}$ such that $\E[X^{{0 },\xi,\alpha}_T]=m_0$ and $\mathrm{Var}(X^{{0 },\xi,\alpha}_T)=\sigma_0^2$. Consequently, when looking at solving the minimisation problem \eqref{eq:value_test_3}, $m_0$ and $\sigma_0$ can be interpreted, respectively, as a target mean and standard deviation for the terminal state. In this context, the cost function plays a role as a penalty function on the distribution of the state at terminal time. See also, for example, the soft-constrained Schr\"odinger bridge problem in \cite{ma2025schr}.
\end{remark}  
\begin{figure}[!t]
    \centering
    \includegraphics[width = 0.9\textwidth]{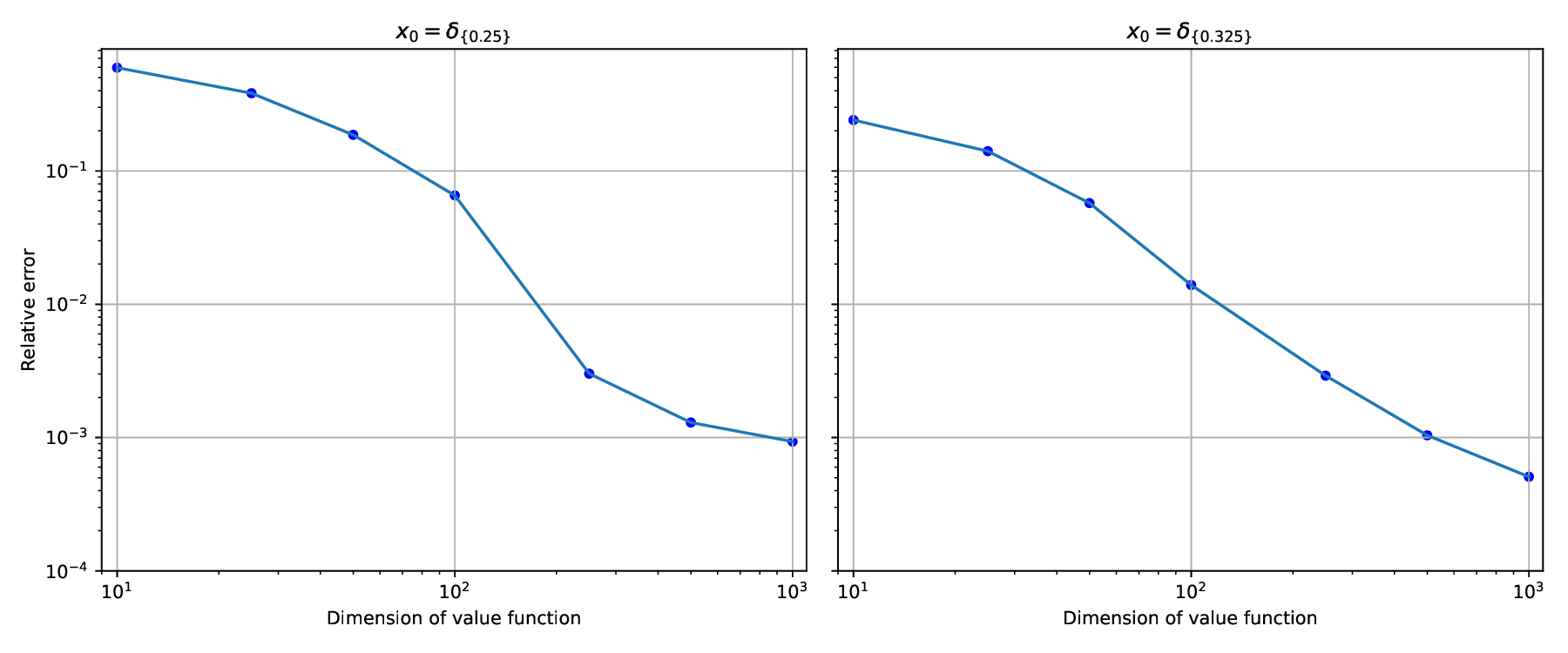}\hfill
    \includegraphics[width = 1\textwidth]{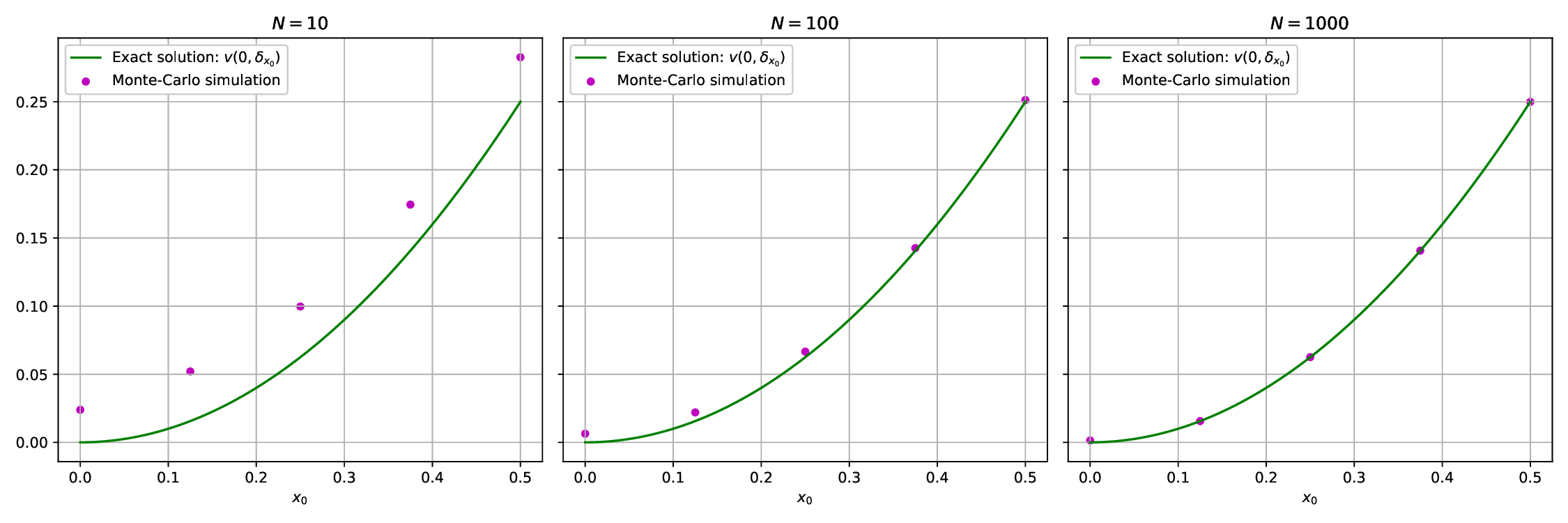}
    \caption{(Example 1) Top: empirical convergence of the finite-dimensional value function as the number of particles $N\to\infty$. Bottom: exact mean field solution $v(0,\delta_{x_0})$ versus generated Monte Carlo estimates, for $x_0\in\{0.000, 0.125, 0.250, 0.375, 0.500\}$, number of players $N$ increasing.}
    \label{fig:III}
\end{figure}
% We have
% \begin{align*}
%     &\E[X^{\xi,\alpha}_T] = \E[\xi] + \E\bigg[\int_0^T \alpha_t\ \de t\bigg],\\
%     &\Big(\mathrm{Var}(X^{\xi,\alpha}_T)-\sigma_0^2\Big)^2 \geq 0.
% \end{align*}
Let $m_0=0$ and $\sigma_0=1$. Having 
$
\E[X^{{0 },\xi,\alpha}_T] = \E[\xi] + \E[\int_0^T \alpha_t\ \de t],
$
one easily gets
\begin{align}\label{value_ex1}
    v(0,\mu) &\geq \inf_{\alpha\in\mathcal{A}}\bigg\{\E\bigg[\xi + \int_0^T \alpha_t\ \de t\bigg]^2\bigg\}
    % &= \inf_{\alpha\in\mathcal{A}_{[0,1]}}\bigg\{\E[\xi]^2 + \E\bigg[\int_0^T \alpha_t\ \de t\bigg]^2 + 2\E[\xi]\E\bigg[\int_0^T \alpha_t\ \de t\bigg]\bigg\}\\
    \geq \E[\xi]^2 + 2\inf_{\alpha\in\mathcal{A}}\bigg\{\E[\xi]\ \E\bigg[\int_0^T \alpha_t\ \de t\bigg]\bigg\}.
\end{align}
Assuming, for instance, $\E[\xi]\geq0$ and $A\subseteq [0,+\infty)$ we deduce
\begin{align}\label{eq:3ex_ineq}
    v(0,\mu) \geq \E[\xi]^2.
\end{align}
It can be easily checked that, if $T=1$ and $\xi\sim\delta_{x_0}$, with $x_0\geq0$, taking $\hat \alpha\equiv 0$, the lower bound $\E[\xi]^2$ is achieved. Indeed, one has  $\E[X^{{0 },\xi,\hat \alpha}_T]=x_0$ and $\mathrm{Var}(X^{{0 },\xi,\hat \alpha}_T)=1$, ending with $J(0,\xi,\hat \alpha) = x_0^2 = \E[\xi]^2 $ and then $v(0,\delta_{x_0})=x_0^2$.\\

We consider hereafter the control constrained into $A=[0,1]$. From the numerical point of view, the constraint has been introduced with the \texttt{sigmoid} as activation function $\sigma_{\rm out}:\R \to (0,1)$ for the last layer of the neural network learning the control $\alpha^*$, i.e. 
$\sigma_{\rm out}(x) = 1/(1+ e^{-x}).$
% \begin{align*}
%     \texttt{sigmoid(x) = 1/(1+exp(-x))}.
% \end{align*}
It easy to check that minimizing over controls valued on $[0,1]$ in \eqref{value_ex1} gives the same value as taking $(0,1)$-valued controls.
It is indeed sufficient to consider the sequence of controls $\alpha_j\equiv\frac{1}{j}$, for $j\in\mathbb{N}_{+}$, to get
\begin{align*}
    \E[X^{{0}, \xi,\alpha_j}_T] = \E[\xi]+\frac{T}{j} = x_0 + \frac{1}{j} \quad\text{and}\quad \mathrm{Var}(X^{{0}, \xi,\alpha}_T) = \mathrm{Var}(\xi) + T = 1,
\end{align*}
then
\begin{align*}
   J\Big(0,\delta_{x_0},\alpha_j\Big) =  \bigg(x_0+\frac{1}{j}\bigg)^2 + (1-1)^2 = x_0^2 + \frac{1}{j^2}+2x_0\frac{1}{j} \xrightarrow[]{j\to+\infty} x_0^2 = \E[\xi]^2,
\end{align*}
which combined with \Cref{eq:3ex_ineq}, implies $v(0,\delta_{x_0})=\E[\xi]^2$. \\
\begin{figure}[!t]
    \centering
    \includegraphics[width = 0.7\textwidth]{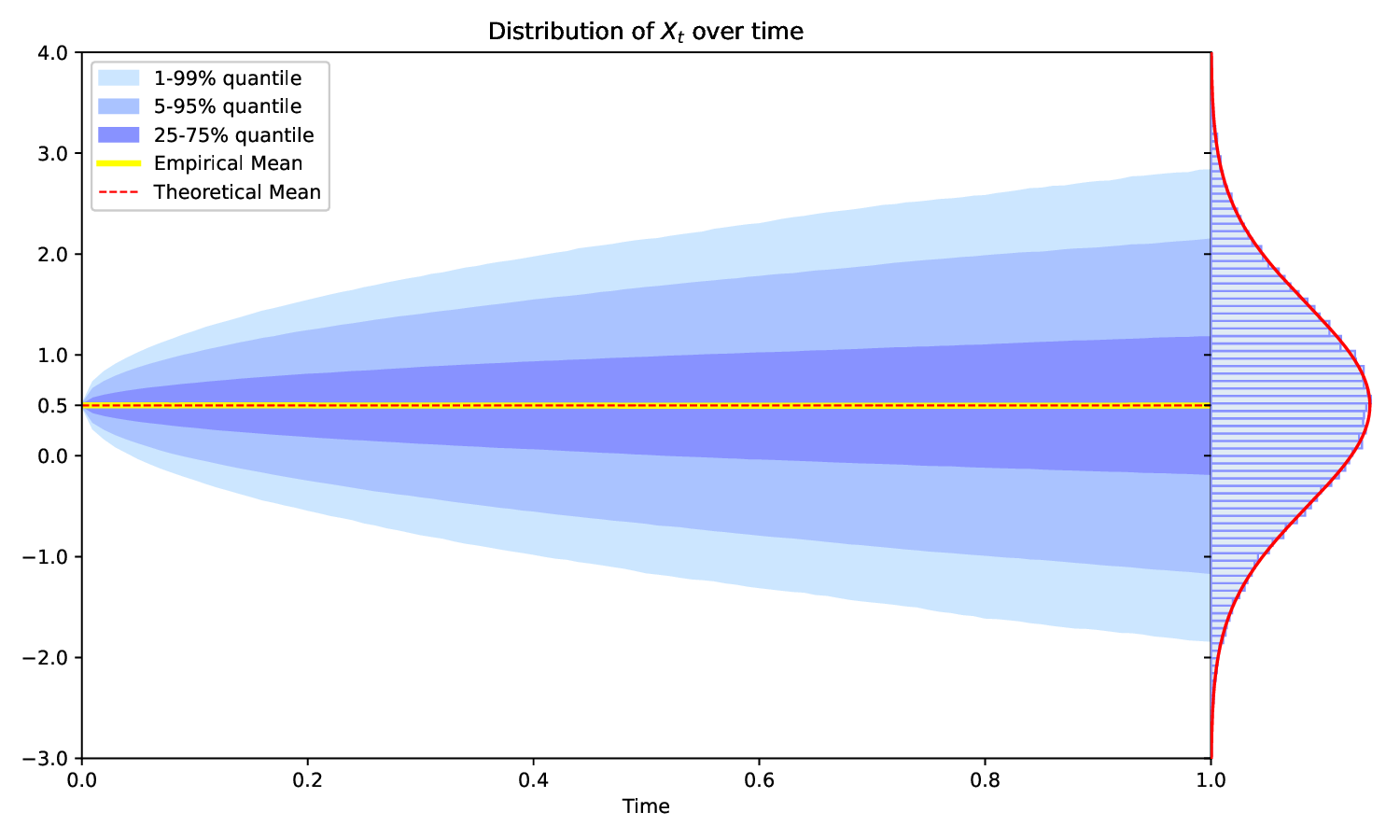}
    \caption{(Example 1) Plot of the distribution quantiles over time, for one simulated player of the process $X^n_t$, starting at $\xi\sim\delta_{0.5}$. The histogram of $X^n_T$ at the terminal time $T=1$ compared with theoretical density $\mathcal{N}(0.5,1)$ is plotted simultaneously on the right vertical axis. }
    \label{fig:III_ex_distrib}
\end{figure}

For our experiments, we trained the neural network for $\bar{v}_N(\cdot,0)$ for points uniformly distributed in $[-1,1]$. The above benchmark of the method is provided to be $v(0,\delta_{x_0})=x_0^2$, and we plot the absolute relative error for different values of $N$ within $10$ and $10^3$ and using 10,000 trajectories for the simulation. The top graph of \Cref{fig:III} demonstrates empirical convergence of our method through the exact mean field value function, for two different initial points. Moreover, as previously shown in \Cref{eq:3ex_ineq}, $\E[\xi]^2$ represents a lower bound for $J(0,\xi,\alpha)$, for every admissible constrained control $\alpha$ in $[0,1]$. A graphical representation for $\xi\sim\delta_{x_0}$, i.e. ${J(0,\delta_{x_0},\alpha) \geq x_0^2}$, is provided in the bottom graph of \Cref{fig:III} (unless the Monte Carlo error is involved), along with approximation improvement with increasing $N$. Finally, in \Cref{fig:III_ex_distrib} we plot the quantiles of the distribution of the trajectory $X^n_t$ for a single player. The plot resembles a Gaussian distribution, for each time step, and this has been verified by testing the assumption with the  Pearson normality test (\texttt{scipy.stats.normaltest}, returning an average $\texttt{p-value}=0.5753$ over time, with a minimum value above $0.05$, sampling $100,000$ trajectories). This is what we expect, taking the optimal control $\hat\alpha\equiv 0$, as  for all  $s\in[0,1], X^{0,x_0,\hat \alpha}_s = x_0 + W_s \sim \mathcal N(x_0, s)$. 
% $$
% X^{0,x_0,\hat \alpha}_s = x_0 + W_s \sim \mathcal N(x_0, s),\quad \text{for all $s\in[0,1]$}.
% $$

%%%%%%%%%%%%%%%%%%%%%%%%%%

\subsection{Example 2}

We consider an optimal liquidation problem with permanent price impact as described in  \cite{acciaio2019extended}. A group of investors aim to liquidate their position by controlling their trading speed over time. This generates a permanent market impact as the price is assumed to depend on the average trading speed of the investors. By considering a continuum of investors, the inventory process is given by
\begin{align*}
    \de Q_t = \alpha_t\, \de t,\quad Q_0=\xi \sim \mu_0 \in\Pcal_2(\R).
\end{align*}
Here $\alpha_t$ generally takes on negative values as investors are liquidating their portfolio. The wealth process is given by
\begin{align*}
    \de X_t =  - \alpha_t(S_t + k\alpha_t)\, \de t,\quad  X_0 = 0,
\end{align*}
where the parameter $k$ represents the temporary market impact which only affects the individual investor. The price process $S$ follows the dynamics
\begin{align*}
    \de S_t = \lambda \E [\alpha_t]\, \de t + \sigma\, \de W_t,\quad S_0 = s_0,
\end{align*}
where the parameter $\lambda \geq 0$ represents the permanent price impact. To ease the notation we will write $(S, Q, X)=(S^{0, s_0,\alpha}, Q^{0, \xi,\alpha}, X^{0, 0,\alpha})$. The cost functional is 
\begin{align*}
    \E \left[- X_T  - Q_T (S_T  - \psi Q_T ) + \phi \int^T_0 Q^2_t\, \de t \right] ,
\end{align*}
where $\psi$ is a penalty parameter for excess inventory at terminal time, and $\phi$ is an `urgency' parameter for the running cost. By rewriting the cost functional as 
\begin{align*}
    \E \left[ \int^T_0 (\alpha_t S_t + k\alpha^2_t + \phi Q^2_t )\, \de t  - Q_T (S_T  - \psi Q_T )\right],
\end{align*}
the problem can then be considered as an extended MFC problem, with a two-dimensional state process $(S,Q)$ and value function
\begin{align*}
    v(0,\mu) = \inf_{\alpha \in \A} \E \left[ \int^T_0 (\alpha_t S_t + k\alpha^2_t + \phi Q^2_t )\, \de t  - Q_T (S_T  - AQ_T )\right],\ \mu = \delta_{s_0} \otimes \mu_0 .
\end{align*}

Note that the diffusion coefficient of $(S,Q)$ in this problem is degenerate. It is shown in \cite{acciaio2019extended} that the explicit solution of the optimal control is
\begin{align}\label{opt_liquid_sol}
    \hat{\alpha}_t = Q_0 r \frac{d_1 e^{-r(T-t)}  - d_2 e^{r(T- t)}}{d_1 e^{-rT} + d_2 e^{rT}} + \E [Q_0] \frac{2\lambda \phi (e^{-rt} + e^{rt})}{(d_1e^{-rT} + d_2e^{rT} )(c_1e^{-rT} + c_2e^{rT} ) },
\end{align}
with constants $d_1 = \sqrt{\phi k}  - \psi$, $d_2 = \sqrt{\phi k} + \psi$, $c_1 = 2d_1 + \lambda$, $c_2 = 2d_2  - \lambda$ and $r = \sqrt{\phi/k}$. In particular, when the initial condition $Q_0$ is non-deterministic, the optimal control $\hat\alpha_t$ is not in a closed loop form. As outlined in \Cref{sigma_degen}, we shall regularise the problem by considering the system
\begin{align*}
    \de S_t = \lambda \E [\alpha_t]\, \de t + \sigma\, \de W_t, \quad
     \de Q^{\varepsilon}_t = \alpha_t\, \de t + \varepsilon \ \de B_t,
\end{align*}
where $\varepsilon>0$ is small and $B$ is independent of $W$, so that we work towards an approximation of $\bar v_{\varepsilon, N}$.\\
\begin{figure}[!t]
    \centering
    \includegraphics[width = 0.5\textwidth]{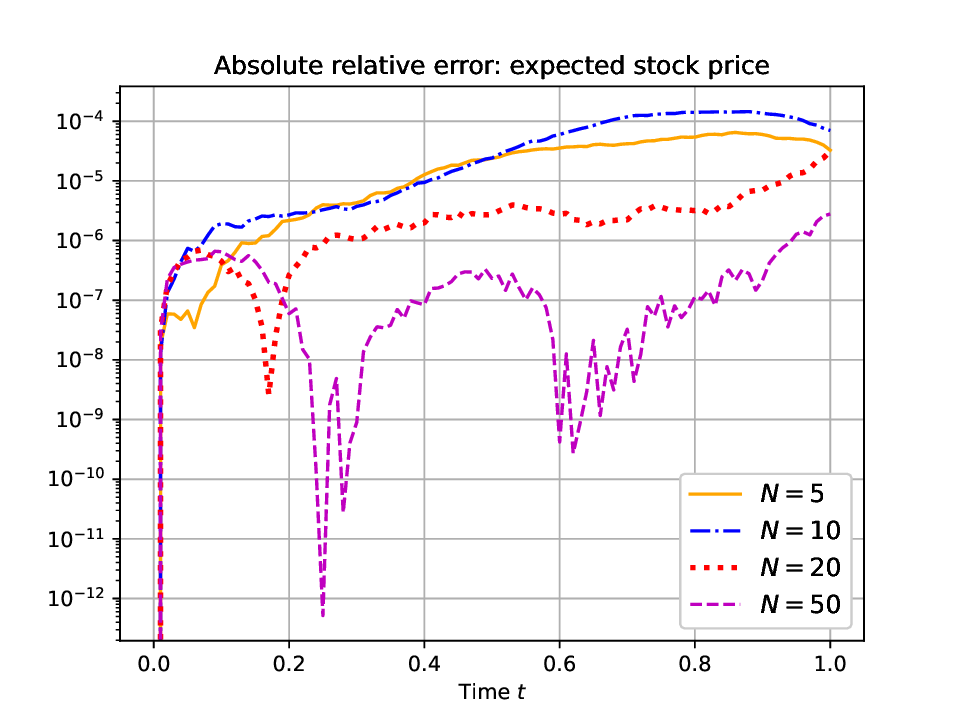}\hfill
    \includegraphics[width = 0.5\textwidth]{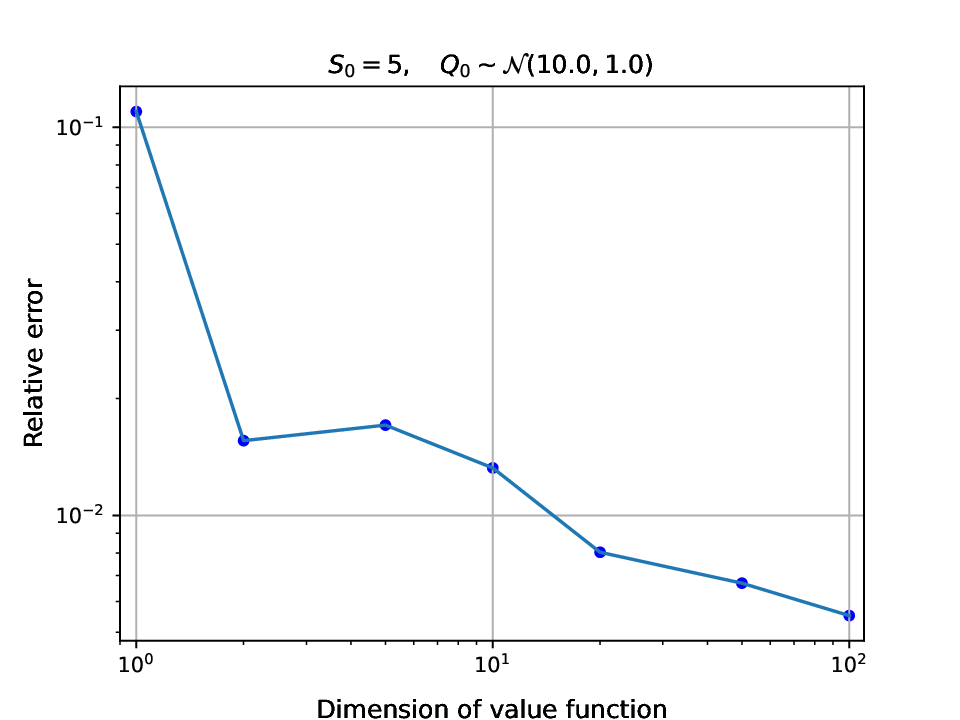}
    \caption{(Example 2) Left: The absolute relative error of the expected stock price, comparing the explicit solution with the finite-dimensional approximation. Right: empirical convergence of the finite-dimensional value function as the number of particles $N\to\infty$.}
    \label{fig:price_impact_dim}
\end{figure}

For our experiments, we consider an initial condition of $S_0 = 5$ and $Q_0 \sim \mathcal{N}(10, 1)$. We also take the following parameters: $T=1$, $k = 0.2$, $\lambda = 0.4$, $\phi = 0.1$, $\psi=2.5$, $\sigma = 1$, and $\varepsilon = 0.01$ for the regularisation parameter. In order to benchmark our method, we utilise the optimal control $\hat \alpha_t$, explicitly given in \eqref{opt_liquid_sol}, to generate Monte Carlo estimates for the corresponding state process $(\hat S, \hat Q)$ and the value function $v$. We first consider the trajectory of the expected stock price $t \mapsto \E[\hat S_t]$, and compare it with the trajectory of the empirical mean of the finite dimensional approximation $t \mapsto \frac{1}{N}\sum^N_{n=1} \E[S^n_t]$. We plot the absolute relative error of the two trajectories on the left graph of \Cref{fig:price_impact_dim} for different values of $n$, using 500,000 trajectories for the simulation to minimise the Monte Carlo error. 
%{\color{red}We see that even for the low values of $N = 5$ and $N=10$, the empirical mean falls within 1\% of the true value. This further improves to a maximum of $0.1\%$ error when increasing to $N=50$.}
On the right graph, we illustrate an instance of the decay of the relative error of the approximated value function against the mean field value function as the value of $n$ increases, for a fixed value of $\mu = \Ncal(10,1)$. 
%{\color{orange} (This part is about the regression results for dim 5, should we include it?)} Finally, by sampling over 1000 measures, we obtain a residual loss of $\mathcal{L}_{\mathrm res} = 5.79e-5$, with 0.386 seconds taken for each evaluation $v(\mu)$. This shows the viability of approximating extended mean field control problems with a possible degenerate diffusion coefficient through a finite-dimensional approximation. Furthermore, 
By considering the regularised problem, we obtain a feedback control, parametrised in the form of a neural network, that approximates an optimal control of general open loop form.

\subsection{Example 3} 
Suppose $W = (W_s)_{s\in[0,T]}$ is a real-valued Brownian motion on a probability space $(\Omega, \F, \PP)$. Let $X = (X_s)_{s \in [t,T]}$ satisfy the SDE
\begin{align*}
    \de X_s = \alpha_s\, \de s + \sqrt{2}\, \de W_s,\quad \mathcal{L}({X_t}) = \mu,
\end{align*}
where $\alpha$ is a real-valued progressively measurable process. Given a terminal cost function $g: \R \to \R$, consider the value function 
\begin{align*}
    v(t, \mu) = \inf_{\alpha \in \A} \left( \int^T_t \E^{\mu}\left[\frac{\lvert \alpha_s \rvert ^2}{2}\right]  \de s + \E^{\mu} \left[ g(X_T) \right] \right),
\end{align*}
where $\E^{\mu}$ denotes the expectation under which $\Lcal({X_t}) = \mu$. It can be shown that $v$ satisfies the following master Bellman PDE (see \cite[Section 2.4]{daudinseeger2024})), on  $[0, T) \times \mathcal{P}_2(\mathbb{R})$,
\begin{align}\label{eq:ex1master}
-\partial_t v(t,\mu) -  \int_{\R^{d}} \left( \text{div}_x \partial_\mu v(t,\mu)(x) - \frac12 |\partial_\mu v(t, \mu)(x)|^2\right) \mu(\de x)  =0,
\end{align}
with terminal condition $v(T, \mu)  =\int_{\R} g(x)\, \mu(\de x)$.
%\frac{\lvert x\rvert^2}{2}
Such a PDE has a unique, classical solution given by
\[
    v(t,\mu) = \int_{\R} w(t,x)\, \mu(\de x),
\]
where ${w:[0,T]\times  \R\to \R}$ solves the following one-dimensional HJB equation:
\begin{align*}
\begin{dcases}
-\partial_t w(t,x) -\Delta w (t,x) + \frac{1}{2}|\nabla w(t,x)|^2 =0,\ & (t,x) \in [0, T) \times \mathbb{R}, \\
w(T,x) = g(x),\ & x \in \mathbb{R},
\end{dcases}
\end{align*}
which has explicit solution 
\begin{align*}
    w(t,x) = -2 \log\left(\frac{1}{\sqrt{4 \pi(T-t)}} \int_{\R} \exp \left( -\frac{(x-y)^2}{4(T-t)} - \frac{g(y) }{2}\right) \de y\right).
\end{align*}
In this context, the argument of the limit in \Cref{thm:n_approx} is constant with respect to the number of players $N$, and one has the equality for any $N \in \N_+$:
\begin{align*}
    v(t,\mu) =  \int_{\R^{N}} \frac{1}{N} \sum^N_{n=1}w(t,x_n) \, \mu(\de x_1) \otimes \ldots \otimes \mu(\de x_N).
\end{align*}
In this specific case, due to the integral form of the exact solution of the problem, we can test the validity of our numerical approximation by simply examining the accuracy of the  finite-dimensional solver. In fact, it is straightforward to verify that 
\begin{align*}
    \frac{1}{N}\sum^N_{n=1}w(t,x_n) = v_N(t,\mathbf{x}),\quad \mathbf{x} = (x_1, \ldots, x_N) \in \R^N,
\end{align*}
where $v_N: [0,T] \times \R^N \to \R$ is the finite-dimensional approximation of \eqref{eq:ex1master}, introduced in Section \ref{sec:finitedimapprox}, which is a solution the following HJB equation on $[0, T) \times \mathbb{R}^{N}$,
\begin{align}\label{eq:pde_3ex}
-\partial_t u(t, \mathbf{x}) -\sum^N_{n=1} \Delta_{x_n} u(t, \mathbf{x}) + \inf _{a \in \mathbb R^N} \left\{  \sum^N_{n=1} \left(a_n \cdot \nabla_{x_n} u(t, \mathbf{x}) + \frac{|a_n|^2}{2N}  \right)   \right\}   =0.
\end{align}
with terminal condition $u(T, \mathbf{x}) = \bar{g}_N(\mathbf{x})\coloneqq \frac{1}{N}\sum^N_{n=1}g(x_n)$. Eq. \eqref{eq:pde_3ex} simplifies to
\begin{align*}
    -\partial_t u -\Delta u + \frac{N}{2}|\nabla u|^2 & =0, \text { on }[0, T) \times \mathbb{R}^{N}.
\end{align*}

In our experiments, we take the terminal condition as the function $g(x) = \lvert x\rvert^2 / {2}$. In order to satisfy the boundedness assumption of $g$ in \Cref{ass:reward}, we truncate $g$ as necessary outside some fixed domain. This aligns with the practical experiments, since the computational step is always confined within a bounded grid.\\

\begin{figure}[!t]
    \centering
    \includegraphics[width = 0.45 \textwidth]{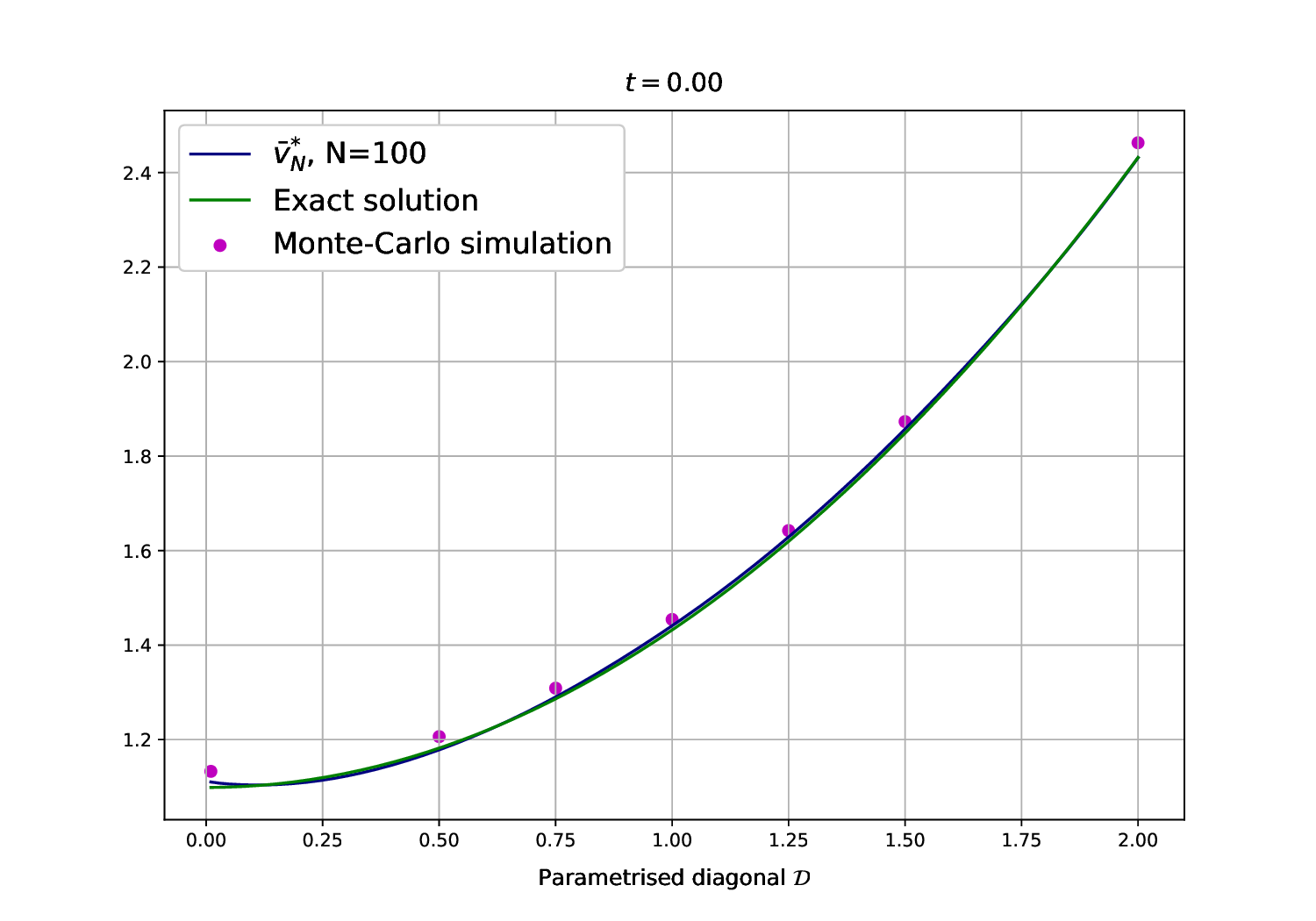}
    \includegraphics[width = 0.45 \textwidth]{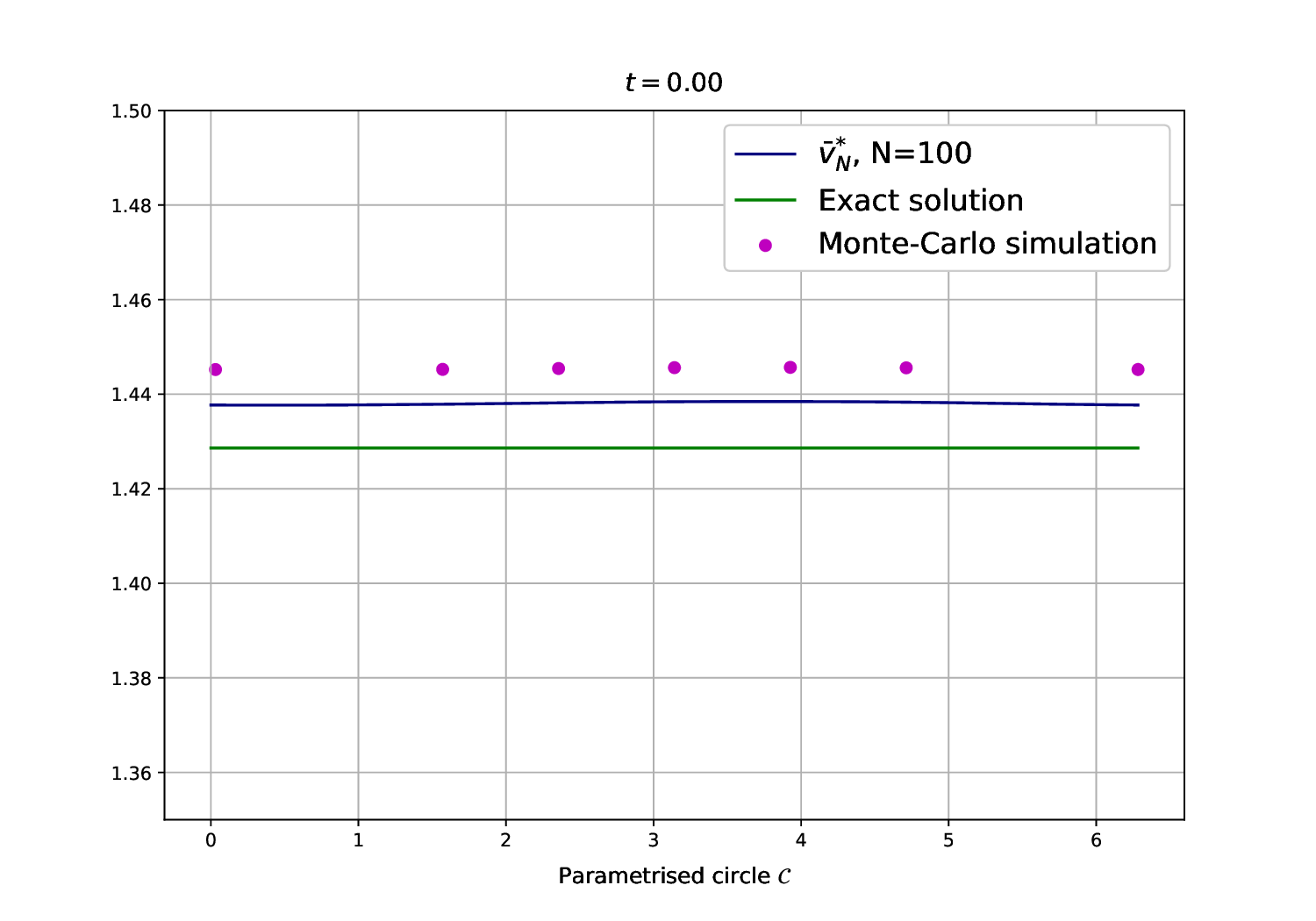}
    \includegraphics[width = 0.45 \textwidth]{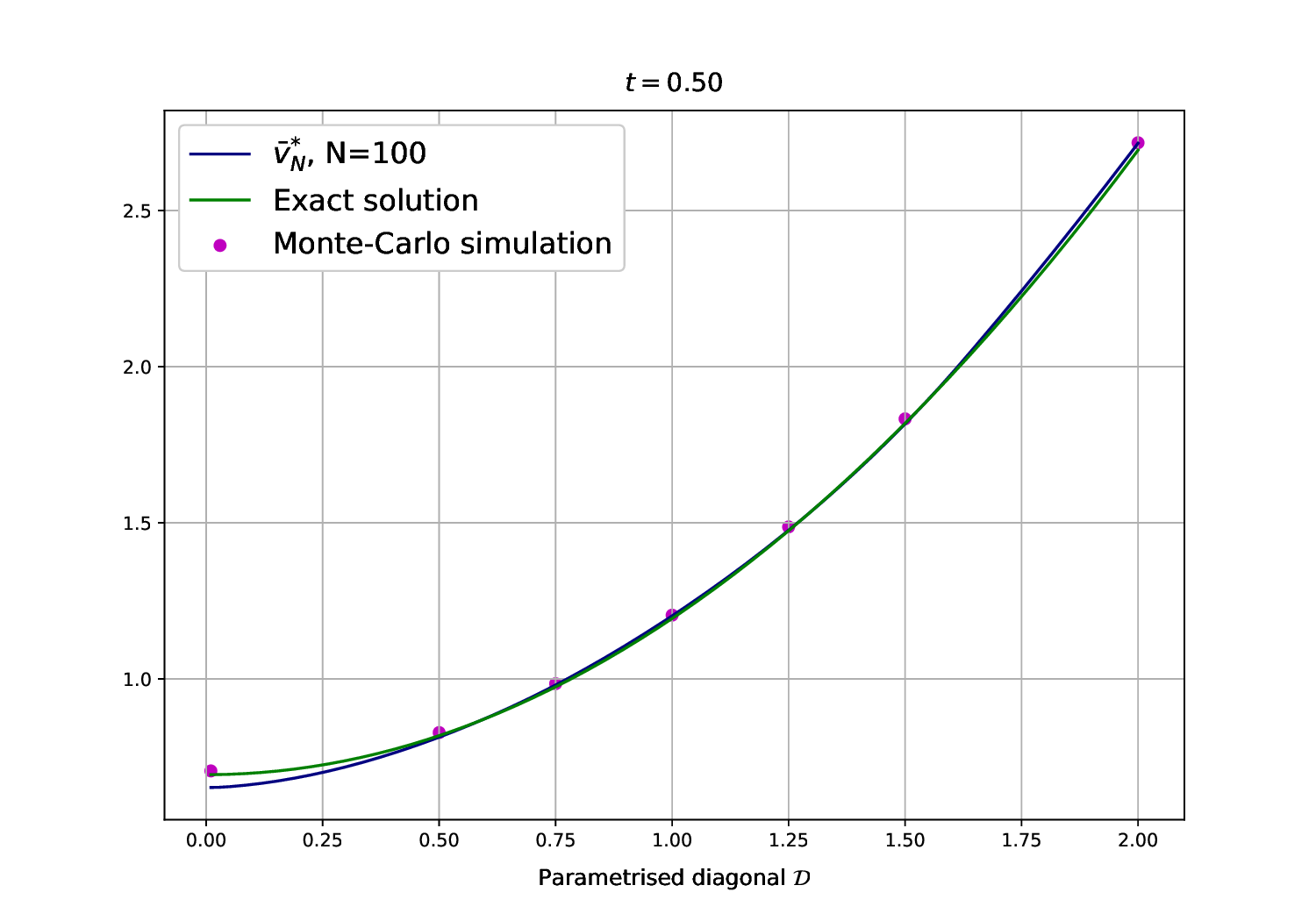}
    \includegraphics[width = 0.45 \textwidth]{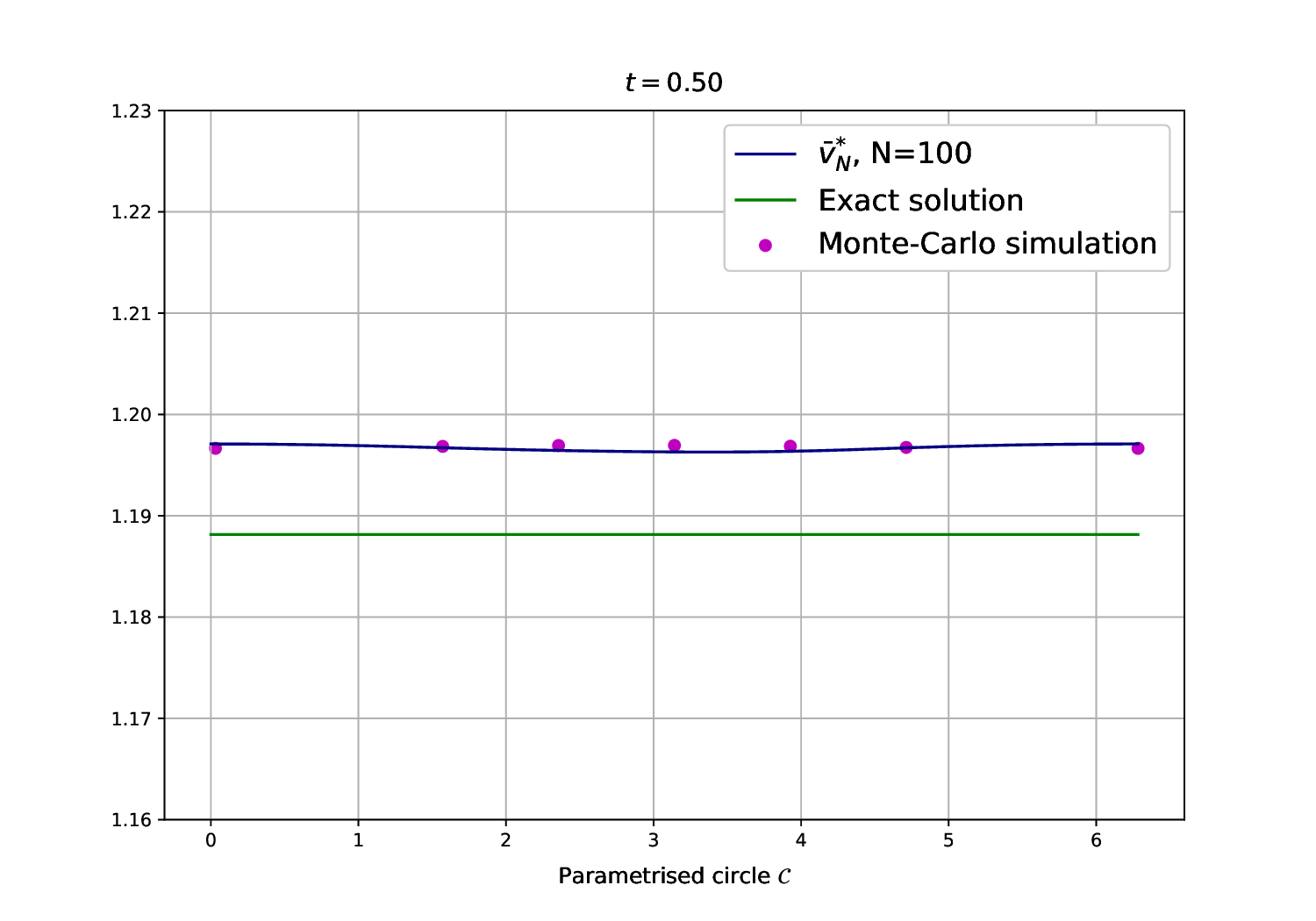}
    \caption{(Example 3) Left column: Tracing $\bar{v}^{*}_{N,\eta}$ along the diagonal of the hypercube $[0,2]^{100}$. Right column: Tracing $\bar{v}^{*}_{N,\eta}$ along a circle on the first two dimensions.}
    \label{fig:daudin_ex}
\end{figure}

\Cref{fig:daudin_ex} shows our experimental results when applied to a finite-dimensional approximation of  100 players. We compare the trained neural network $\bar{v}^{*}_{N,\eta}$ with the explicit one-dimensional solution, as well as Monte Carlo estimates along two parameterised curves in the domain. We report a residual loss of  $\mathcal{L}_{res} = 2.45e-3$, averaged over 1000 random samples of quantized measures, as well as an HJB loss of $\mathcal{L}_{\text {HJB}} = 3.82e-4$. The HJB loss demonstrates a good fit of the neural network towards the finite-dimensional value function, and the residual loss indicates a good approximation of the mean field value function $v$ on the domain $\Pcal_2(\R)$. In terms of computational performance, the training time of the neural network $\bar{v}^{*}_{N,\eta}$ was approximately 7 hours.
%, which is a significant improvement on the reported 3 days of training time in \cite{pham2022mean}}.
After training, an average of 0.66 seconds is required for the evaluation of $v(t,\mu)$ for a given measure $\mu$. In comparison, generating an Monte Carlo estimate of the value function `online' directly with the control $\alpha^{*}_{\theta}$ requires approximately 2.67 seconds per evaluation. Therefore, for applications requiring knowledge of the value function across the whole domain $\Pcal_2(\R^d)$, for example the computation of level set $\{\mu \in \Pcal_2(\R): v(t,\mu) = 0\}$, significant savings in computational time can be achieved.

\FloatBarrier 
\subsection*{Acknowledgments}
The authors acknowledge financial support under the National Recovery and Resilience Plan (NRRP):
%\vspace{0.1em} \\ \small{Mission 4, Component 2, Investment 1.1, Call for tender No. 1409 published on 14.9.2022 by the Italian Ministry of University and Research (MUR), funded by the European Union – NextGenerationEU – Project Title: 
Probabilistic Methods for Energy
Transition – CUP G53D23006840001 - Grant Assignment Decree No. 1379 adopted on 01/09/2023
by MUR. Jonathan Tam is also supported by the G-Research research grant.

\bibliographystyle{abbrvurl}
\bibliography{ref}

\end{document}